\documentclass[12pt,a4paper,twoside]{amsart}
%%%%%%%%%%%%%%%%%%%%%%%%%%%%%%%%%%%%%%%%%%%%%%%%%%%%%%%%%%%%%%%%%%%%%%%%%%%%%%%%%%%%%%%%%%
\usepackage[english]{babel} 
\usepackage[utf8]{inputenc}
%%%%%%%%%%%%%%%%%%%%%%%%%%%%%%%%%%%%%%%%%%%%%%%%%%%%%%%%%%%%%%%%%%%%%%%%%%%%%%%%%%%%%%%%%%%%
\usepackage{amsmath,amssymb,amsthm,amscd}
\usepackage{graphicx}
\usepackage{fancyhdr}
\usepackage{enumerate}
\usepackage{indentfirst}
\usepackage{geometry}
\usepackage{hyperref}
\usepackage[all]{xy}
\usepackage{calc,color}
%%%%%%%%%%%%%%%%%%%%%%%%%%%%%%%%%%%%%%%%%%%%%%%%%%%%%%%%%%%%%%%%%%%%%%%%%%%%%%%%%%%%%%%%%%%%%
\newcommand{\R}{\ensuremath{\mathbb{R}}}
\newcommand{\N}{\ensuremath{\mathbb{N}}} 
\newcommand{\LL}{\ensuremath{\mathcal{L}}}
\newcommand{\delim}[3]{\left#1 #3 \right#2}  
\newcommand{\pin}[1]{\delim{\langle}{\rangle}{#1}} 
  
\def \tn {\textnormal}  
  
%%%%%%%%%%%%%%%%%%%%%%%%%%%%%%%%%%%%%%%%%%%%%%%%%%%%%%%%%%%%%%%%%%%%%%%%%%%%%%%%%%%%%%%%%%%%%%%
\theoremstyle{definition} 
\newtheorem{theo}{Theorem}[section] 
\newtheorem{lem}[theo]{Lemma} 
\newtheorem{cor}[theo]{Corollary} 
\newtheorem{prop}[theo]{Proposition} 
\newtheorem{defi}[theo]{Definition} 
%%%%%%%%%%%%%%%%%%%%%%%%%%%%%%%%%%%%%%%%%%%%%%%%%%%%%%%%%%%%%%%%%%%%%%%%%%%%%%%%%%%%%%%%%%%%%
\linespread{1.5}
\geometry{lmargin=2cm,rmargin=2cm,tmargin=2cm,bmargin=2cm}
 
\parindent 1.0cm 
%%%%%%%%%%%%%%%%%%%%%%%%%%%%%%%%%%%%%%%%%%%%%%%%%%%%%%%%%%%%%%%%%%%%%%%%%%%%%%%%%%%%%%%%%%%%%%
\title[Rate of convergence of attractors]{Rate of convergence of attractors for semilinear singularly perturbed problems: parabolic equations with large diffusion}
\author[L. Pires]{Leonardo Pires$^*$}
\address[Leonardo Pires]{Departamento de Matem\'atica,
Instituto de Ci{\^e}ncias Matem{\'a}ticas e de Computa\c{c}{\~a}o,
Universidade de S{\~a}o Paulo--Campus de S{\~a}o Carlos, Caixa
Postal 668, 13560-970 S{\~a}o Carlos SP, Brazil}
\email{leopires@icmc.usp.br}
%%%%%%%%%%%%%%%%%%%%%%%%%%%%%%%%%%%%%%%%%%%%%%%%%%%%%%%%%%%%%%%%%%%%%%%%%%%%%%%%%%%%%%%%%%%%%%%

\begin{document}

\begin{abstract}
We exhibit a singularly perturbed parabolic problems for which the asymptotic behavior can be described by an one-dimensional ordinary differential equation.  We estimate the continuity of attractors in the Hausdorff metric by rate of convergence of resolvent operator.  
\end{abstract}

\maketitle

\section{Introduction, Functional Setting and Statement of the Results}
We consider the parabolic problem
\begin{equation}\label{perturbed_problem}
\begin{cases}
u^\varepsilon_t-\tn{div}(p_\varepsilon(x) \nabla u^\varepsilon)+(\lambda+V_\varepsilon(x))u^\varepsilon = f(u^\varepsilon),\quad x\in \Omega,\,\,t>0, \\
\dfrac{\partial u^\varepsilon}{\partial \vec{n}}=0,\quad x\in \partial \Omega,\\
u^\varepsilon(0)=u^\varepsilon_0,
\end{cases}
\end{equation}
where $0<\varepsilon<\varepsilon_0$, $\Omega\subset \R^n$ is a bounded smooth open connected set, $\partial \Omega$ is the boundary of $\Omega$ and $\frac{\partial u^\varepsilon}{\partial \vec{n}}$ is the co-normal derivative operator with $\vec{n}$ the unit outward normal vector to $\partial \Omega$. We assume the potentials $V_\varepsilon\in L^p(\Omega)$ with  
\begin{equation}
p\begin{cases} \geq 1,\quad n=1,\\\geq 2,\quad n\geq2,
\end{cases}
\end{equation}
and $V_\varepsilon$ converges to $V_0 \in \R$ in $L^p(\Omega)$, that is, we consider $\tau(\varepsilon)$ an increasing positive function of $\varepsilon$ such that
\begin{equation}\label{map_tau}
\|V_\varepsilon-V_0\|_{L^p(\Omega)}\leq \tau(\varepsilon)\overset{\varepsilon\to 0}\longrightarrow 0.
\end{equation}
Note that \eqref{map_tau} implies that the spatial average of $V_\varepsilon$ converges to $V_0$ as $\varepsilon\to 0$.  We choice $\lambda\in \R$ sufficiently large for that $\tn{ess}\inf_{x\in\Omega}V_\varepsilon(x)+\lambda\geq m_0$ for some positive constant $m_0$. Moreover we will assume the diffusion is large in $\Omega$, that is, for each $\varepsilon\in (0,\varepsilon_0)$ the map $p_\varepsilon$ is positive smooth defined in $\bar{\Omega}$ satisfying
$$
p(\varepsilon):=\min_{x\in \bar{\Omega}} \{p_\varepsilon(x)\}\overset{\varepsilon\to 0}\longrightarrow \infty\quad\tn{with}\quad 0<m_0\leq p_\varepsilon(x),\quad\forall\,x\in\Omega.
$$ 
%$$
%M(\varepsilon):=\max_{x\in \bar{\Omega}} \{p_\varepsilon(x)\}\quad\tn{is such that}\quad M(\varepsilon)p(\varepsilon)^{-1}<\infty.
%$$
Since large diffusivity implies fast homogenization, we expect, for small values of $\varepsilon$, that the solution of this problem converge to a constant spatial function in $\Omega$. Indeed by taking the average on $\Omega$, the limiting problem as $\varepsilon$ goes to zero is given by ordinary differential equation
\begin{equation}\label{limit_problem}
\begin{cases}
\dot{u}^0+(\lambda+V_0)u^0=f(u^0),\quad t>0,\\
u^0(0)=u^0_0,
\end{cases}
\end{equation}
which \cite{Rodriguez-Bernal2005} proves to determine the asymptotic behavior.

In this paper we are concerning in how fast the dynamics of the problem \eqref{perturbed_problem} approaches the dynamics of the problem \eqref{limit_problem}. We will estimate this convergence by functions $\tau(\varepsilon)$ and $p(\varepsilon)$.

Since we have established the limit problem we need to study the well posedness of \eqref{perturbed_problem} and \eqref{limit_problem} as abstract parabolic equation in appropriated Banach spaces. To that end, we define the operator $A_\varepsilon:\mathcal{D}(A_\varepsilon)\subset L^2(\Omega)\to L^2(\Omega)$ by
$$
\mathcal{D}(A_\varepsilon)=\{u\in H^2(\Omega)\,;\, \dfrac{\partial u^\varepsilon}{\partial \vec{n}}=0\},\quad
A_\varepsilon u=-\tn{div}(p_\varepsilon \nabla u)+(\lambda+V_\varepsilon )u.
$$
We denote 
$
L^2_{\Omega}=\{u\in H^1(\Omega)\,;\, \nabla u=0 \tn{ in } \Omega\}
$
and we define the operator $A_0:L^2_{\Omega}\subset L^2(\Omega)\to L^2(\Omega)$ by
$
A_0 u=(\lambda+V_0)u.
$

It is well known that $A_\varepsilon$ is a positive invertible operator with compact resolvent for each $\varepsilon\in [0,\varepsilon_0]$, hence we define in the usual way (see\cite{Henry1980}), the fractional power space $X_\varepsilon^\frac{1}{2}=H^1(\Omega)$, $\varepsilon\in (0,\varepsilon_0]$, and $X_0^\frac{1}{2}=L^2_{\Omega}$ with the scalar products 
$$
\pin{u,v}_{X_\varepsilon^{\frac{1}{2}}}=\int_\Omega p_\varepsilon \nabla u \nabla v\,dx+\int_\Omega(\lambda +V_\varepsilon)uv\,dx,\quad u,v\in X_\varepsilon^\frac{1}{2},\,\,\,\varepsilon\in (0,\varepsilon_0];
$$
$$
\pin{u,v}_{X_0^{\frac{1}{2}}}=|\Omega|^{-1}(\lambda+V_0)uv,\quad u,v\in X_0^\frac{1}{2}.
$$
The space $X_0^\frac{1}{2}$ is a one dimensional closed subspace of $X_\varepsilon^\frac{1}{2}$, $\varepsilon\in (0,\varepsilon_0]$ and $X_\varepsilon^\frac{1}{2}\subset H^1(\Omega)$ with injection constant independent of $\varepsilon$, but the injection $H^1(\Omega)\subset X_\varepsilon^\frac{1}{2}$ is not uniform, in fact is valid
\begin{equation}\label{non_equivalence_norm}
m_0\|u\|_{H^1}^2\leq \|u\|_{X_\varepsilon^\frac{1}{2}}^2\leq M(\varepsilon) \|u\|^2_{H^1}, 
\end{equation}
with $M(\varepsilon)\to \infty$ as $\varepsilon\to 0$ and we will show in the Corollary \ref{non_equivalence_norm1} that there is no positive constant $C$ independent of $\varepsilon$ such that 
$
\|u\|_{X_\varepsilon^\frac{1}{2}}^2\leq C\|u\|^2_{H^1}.
$
Therefore bounds for solutions in the Sobolev spaces does not give suitable estimates in the fractional power space, even though we will consider $X_\varepsilon^\frac{1}{2}$ as phase space. 

If we denote the Nemitskii functional of $f$ by the same notation $f$, then \eqref{perturbed_problem} and \eqref{limit_problem} can be written as  
\begin{equation}\label{semilinear_problem}
\begin{cases}
u^\varepsilon_t+A_\varepsilon u^\varepsilon = f(u^\varepsilon), \\
u^\varepsilon(0)=u_0^\varepsilon \in X_\varepsilon^{\frac{1}{2}},\quad \varepsilon\in [0,\varepsilon_0].
\end{cases}
\end{equation}  

We assume $f$ is continuously differentiable and the equilibrium set of \eqref{semilinear_problem} for $\varepsilon=0$ is composed of a finite number of hyperbolic equilibrium points. That is 
$$
\mathcal{E}_0:=\{u\in D(A_0)\,;\,A_0u-f(u)=0\}=\{x_*^{1,0}<x_*^{2,0}\leq ...\leq x_*^{m,0}\}
$$
and $\sigma(A_0-f'(u_*^{i,0}))\cap\{\mu\,;\,Re(\mu)=0\}=\emptyset$, for $i\in\{1,...,m\}$.

In order to ensure that all solution of \eqref{semilinear_problem} are globally defined, and there is a global attractor for the nonlinear semigroup given by theses solutions, we assume the following conditions. 
\begin{itemize}
\item[(i)]  If $n=2$, for all $\eta>0$, there is a constant $C_\eta>0$ such that
$$
 |f(u)-f(v)|\leq C_\eta (e^{\eta|u|^2}+e^{\eta|v|^2})|u-v|,\quad \forall\,u,v\in \R,
$$     
and if $n\geq 3$, there is a constant $\tilde{C}>0$ such that
$$
|f(u)-f(v)|\leq \tilde{C}|u-v| (|u|^{\frac{4}{n-2}}+|v|^{\frac{4}{n-2}}+1 ),\quad \forall\,u,v\in \R.
$$     
\item[(ii)] 
$$
\limsup_{|u|\to\infty} \dfrac{f(u)}{u} <0.
$$
\end{itemize}
Under theses assumptions \cite{J.M.Arrieta1999,J.M.Arrieta2000} and \cite{Hale1988} ensure that the problem \eqref{semilinear_problem} is globally well posed and generate a nonlinear semigroup satisfying
\begin{equation}\label{nonlinear_semigroup}
T_\varepsilon(t)u_0^\varepsilon=e^{-At}u_0^\varepsilon+\int_0^t e^{-A(t-s)}f(T_\varepsilon(s)u_0^\varepsilon)\,ds,\quad t\geq 0.
\end{equation}
Moreover there is a global attractor $\mathcal{A}_\varepsilon$ for $T_\varepsilon(\cdot)$ uniformly bounded in $X_\varepsilon^\frac{1}{2}$, that is
$$
\sup_{\varepsilon\in [0,\varepsilon_0]}\sup_{w\in \mathcal{A}_\varepsilon}\|w\|_{X_\varepsilon^\frac{1}{2}}<\infty.
$$ 
We also have $T_0(\cdot)$ is a Morse-Smale semigroup (see \cite{Bortolan}) and $\mathcal{A}_0=[x_*^{1,0},x_*^{m,0}]$.  

We recall that 
\begin{defi}[Invariant Manifold]\label{invariant_manifold}
A set $\mathcal{M}_\varepsilon\subset X_\varepsilon^\frac{1}{2}$ is an invariant manifold for \eqref{semilinear_problem} when for each $u_0^\varepsilon\in \mathcal{M}_\varepsilon$ there is a global solution $u^\varepsilon(\cdot)$ of \eqref{semilinear_problem} such that $u^\varepsilon(0)=u_0^\varepsilon$ and $u^\varepsilon(t)\in\mathcal{M}_\varepsilon$ for all $t\in\R$.
\end{defi}

For $z^\varepsilon\in X_\varepsilon^\frac{1}{2}$ and $A,B\subset X_\varepsilon^\frac{1}{2},$ we denote
$$
\tn{dist}(z^\varepsilon,B)=\inf_{x\in B}\|z-x\|_{X_\varepsilon^\frac{1}{2}}\quad\tn{and}\quad
\tn{dist}_H(A,B)=\sup_{z\in A}\tn{dist}(z,A)
$$
The Hausdorff metric is defined by
$$
\tn{d}_H(A,B)=\tn{dist}_H(A,B)+\tn{dist}_H(B,A).
$$

We consider the projection 
$$
Pu=\frac{1}{|\Omega|}\int_\Omega u \,dx,\quad u\in L^2(\Omega)\quad\tn{or}\quad u\in X_\varepsilon^\frac{1}{2}.
$$
Thus $P$ is an orthogonal projection acting on $L^2$ onto $L^2_\Omega$ or $X_\varepsilon^\frac{1}{2}$ onto $X_0^\frac{1}{2}$.
 
We are now in position to state the main result. 
\begin{theo}\label{main_result} For $\varepsilon\in (0,\varepsilon_0]$ there is a positive constant $C$ independent of $\varepsilon$ such that the operators $A_\varepsilon$ satisfy
$$
\|A_\varepsilon^{-1}-A_0^{-1}P\|_{\LL(L^2,X_\varepsilon^\frac{1}{2})}\leq C(\tau(\varepsilon)+p(\varepsilon)^{-\frac{1}{2}}).
$$
There is an one dimensional invariant manifold $\mathcal{M}_\varepsilon$ for \eqref{semilinear_problem} such that $\mathcal{A}_\varepsilon\subset \mathcal{M}_\varepsilon$ and the flow on $\mathcal{A}_\varepsilon$ can be reduced to an ordinary differential equation. Moreover the continuity of attractors of \eqref{nonlinear_semigroup} can be estimate by   
$$
\tn{d}_H(\mathcal{A}_\varepsilon,\mathcal{A}_0)\leq C(\tau(\varepsilon)+p(\varepsilon)^{-\frac{1}{2}}).
$$
\end{theo}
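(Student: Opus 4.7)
\bigskip

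\noindent\textbf{Proof proposal.}

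The plan is to handle the three assertions in order, since each feeds into the next. For the resolvent estimate, I would fix $g\in L^2(\Omega)$ and set $u^\varepsilon=A_\varepsilon^{-1}g$, $w=A_0^{-1}Pg$, and split $u^\varepsilon=Pu^\varepsilon+Qu^\varepsilon$ with $Q=I-P$. Testing the equation $A_\varepsilon u^\varepsilon=g$ against $Qu^\varepsilon$ (which has zero mean) eliminates the constant parts and gives
\[
\int_\Omega p_\varepsilon|\nabla u^\varepsilon|^2\,dx+\int_\Omega(\lambda+V_\varepsilon)u^\varepsilon Qu^\varepsilon\,dx=\int_\Omega g\,Qu^\varepsilon\,dx.
\]
The ellipticity bound $p_\varepsilon\geq p(\varepsilon)$ together with Poincar\'e's inequality for zero-mean functions forces $\|Qu^\varepsilon\|_{H^1}\leq Cp(\varepsilon)^{-1/2}\|g\|_{L^2}$, hence $\|Qu^\varepsilon\|_{X_\varepsilon^{1/2}}=O(p(\varepsilon)^{-1/2})$ by \eqref{non_equivalence_norm} applied only to the $L^2$ part of the norm (the $\nabla$ part is $O(p(\varepsilon)^{-1/2})$ directly since the weight $p_\varepsilon$ is absorbed in the estimate). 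To treat $Pu^\varepsilon-w$, I would apply $P$ to the equation, obtaining $(\lambda+V_0)Pu^\varepsilon=Pg+P((V_0-V_\varepsilon)u^\varepsilon)$; using $\|V_\varepsilon-V_0\|_{L^p}\leq\tau(\varepsilon)$ with H\"older and the $L^2$-bound on $u^\varepsilon$ yields $|Pu^\varepsilon-w|\leq C\tau(\varepsilon)\|g\|_{L^2}$. Combining the two pieces and observing that $w=A_0^{-1}Pg\in X_0^{1/2}$, one reads the claimed bound.

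For the invariant manifold, the key is a spectral gap for $A_\varepsilon$. From the resolvent convergence just proved and the fact that $A_0$ has the single simple eigenvalue $\lambda+V_0$ on $X_0^{1/2}$, the first eigenvalue $\mu_1^\varepsilon$ of $A_\varepsilon$ converges to $\lambda+V_0$ while all remaining eigenvalues blow up at rate $p(\varepsilon)$, because they correspond to nonconstant Neumann eigenfunctions and their Rayleigh quotients are bounded below by $p(\varepsilon)$ times the positive Neumann eigenvalues of $-\Delta$. This gap grows without bound, so the standard invariant manifold theorem (as used in the Henry--Arrieta--Carvalho framework cited in the paper) produces a one-dimensional $C^1$ graph $\mathcal{M}_\varepsilon$ over the one-dimensional eigenspace associated with $\mu_1^\varepsilon$, invariant under $T_\varepsilon(\cdot)$ and normally hyperbolically attracting at exponential rate $O(p(\varepsilon))$. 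Because $\mathcal{A}_\varepsilon$ is forward-invariant and bounded, the usual argument (flow on $\mathcal{A}_\varepsilon$ is globally defined and backwards bounded) places $\mathcal{A}_\varepsilon\subset\mathcal{M}_\varepsilon$, and the reduction to a scalar ODE on $\mathcal{M}_\varepsilon$ is then automatic.

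For the Hausdorff rate, I would argue upper and lower semicontinuity separately. Upper semicontinuity with rate is obtained by representing an arbitrary $u_\varepsilon\in\mathcal{A}_\varepsilon$ through a bounded global solution, applying the variation of constants formula \eqref{nonlinear_semigroup}, and using the resolvent estimate to compare $e^{-A_\varepsilon t}$ with $e^{-A_0 t}P$ on each finite time window; a Gronwall step together with the uniform $X_\varepsilon^{1/2}$-bound of the attractors produces $\mathrm{dist}_H(\mathcal{A}_\varepsilon,\mathcal{A}_0)\leq C(\tau(\varepsilon)+p(\varepsilon)^{-1/2})$. Lower semicontinuity with rate uses hyperbolicity of each $x_*^{i,0}\in\mathcal{E}_0$: the implicit function theorem applied to $A_\varepsilon u-f(u)=0$ (with the resolvent estimate controlling the perturbation) yields equilibria $x_*^{i,\varepsilon}\in\mathcal{M}_\varepsilon$ within the same rate, and because $T_0(\cdot)$ is Morse--Smale the unstable manifolds are $C^1$-stable with rate inherited from that of the equilibria and their linearizations. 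Piecing the local unstable manifolds into the global attractor of the scalar reduced equation then yields the matching lower bound, and the two combine into the stated Hausdorff estimate.

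The main obstacle will be the lower-semicontinuity rate: one needs more than mere convergence of equilibria, namely quantitative $C^1$-closeness of the local unstable manifolds of $x_*^{i,\varepsilon}$ to those of $x_*^{i,0}$ in the $X_\varepsilon^{1/2}$-norm, and this in turn requires uniformly controlled spectral projections for $A_\varepsilon-f'(x_*^{i,\varepsilon})$ despite the non-uniform equivalence \eqref{non_equivalence_norm}. I would address this by performing all manifold constructions inside the one-dimensional $\mathcal{M}_\varepsilon$, where the norm $\|\cdot\|_{X_\varepsilon^{1/2}}$ restricts to a uniformly equivalent norm (because on constants it matches $|\Omega|^{-1}(\lambda+V_\varepsilon)\cdot|\cdot|^2$, which is uniformly bounded), so the reduced ODE enjoys uniform estimates and the rate transfers cleanly.
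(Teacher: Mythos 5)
Your resolvent estimate is essentially the paper's Lemma \ref{Rate_of_convergence} reorganized: where you split $u^\varepsilon=Pu^\varepsilon+Qu^\varepsilon$ and estimate the two pieces separately, the paper tests the weak form against $u^\varepsilon-u^0$ directly and obtains the same bound; both hinge on weighted Poincar\'e plus $L^p$-control of $V_\varepsilon-V_0$, so this part is sound. The invariant manifold step is where you lose the thread. You argue for existence of a one-dimensional $C^1$ graph $\mathcal{M}_\varepsilon$ and exponential normal attraction, but the quantity that actually drives the attractor estimate in the paper is the \emph{size} of the graph, $|\!|\!|s_*^\varepsilon|\!|\!|\leq C(\tau(\varepsilon)+p(\varepsilon)^{-1/2})$, which comes from the observation that $s_*^0\equiv 0$ (since $Q_0(\bar\lambda)=I_{X_0^{1/2}}$) together with $(I-Q_\varepsilon(\bar\lambda))f(v^0)$ being small by the projection estimate. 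Merely knowing $\mathcal{M}_\varepsilon$ exists and is normally hyperbolic does not give you this bound, and without it your third step has nothing quantitative to work with.

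Your third step also has a genuine gap. The upper-semicontinuity argument you sketch — compare $e^{-A_\varepsilon t}$ with $e^{-A_0 t}P$ on a finite window, Gronwall, then use the uniform bound on the attractors — does not yield the clean rate $C(\tau(\varepsilon)+p(\varepsilon)^{-1/2})$: the Gronwall step exponentiates the comparison error over the time window, and balancing a finite horizon against attraction typically degrades the rate by a logarithmic factor. The paper sidesteps this entirely. It writes
$\tn{d}_H(\mathcal{A}_\varepsilon,\mathcal{A}_0)\leq \tn{d}_H(\mathcal{A}_\varepsilon,Q_\varepsilon(\bar\lambda)\mathcal{A}_\varepsilon)+\tn{d}_H(Q_\varepsilon(\bar\lambda)\mathcal{A}_\varepsilon,PQ_\varepsilon(\bar\lambda)\mathcal{A}_\varepsilon)+\tn{d}_H(PQ_\varepsilon(\bar\lambda)\mathcal{A}_\varepsilon,\mathcal{A}_0)$,
controls the first term by $|\!|\!|s_*^\varepsilon|\!|\!|$ because $\mathcal{A}_\varepsilon\subset\mathcal{M}_\varepsilon$, the second by $\|Q_\varepsilon(\bar\lambda)-P\|$ via \eqref{projection_convergence}, and the third by noting that both sets are intervals in the one-dimensional reduced coordinate — $\mathcal{A}_0=[x_*^{1,0},x_*^{m,0}]$ and, by Morse--Smale stability, $PQ_\varepsilon(\bar\lambda)\mathcal{A}_\varepsilon=[PQ_\varepsilon(\bar\lambda)x_*^{1,\varepsilon},PQ_\varepsilon(\bar\lambda)x_*^{m,\varepsilon}]$ — so the Hausdorff distance reduces to endpoint comparison, and the endpoints are equilibria whose rate is already known from the resolvent estimate via the implicit-function-type argument. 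Your ``piecing together local unstable manifolds'' route for lower semicontinuity could in principle be made quantitative, but it is substantially heavier than needed and you never close the upper bound; the interval observation makes both bounds fall out at once. You gesture at the key point (uniform norm equivalence on $\mathcal{M}_\varepsilon$, 1D reduction) but do not follow through to the simple fact that attractors of scalar ODEs are intervals, which is what actually produces the rate.
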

The proof of Theorem \ref{main_result} will be done in the Corollary \ref{rate_of_resolvent} and Theorem \ref{rate_attractors} in the following sections.

The paper is divided as follow: in Section \ref{Rate of Convergence of Resolvent and Equilibria} we make a detailed study of operators $A_\varepsilon$, $\varepsilon\in [0,\varepsilon_0]$ obtaining the rate of convergence of resolvent operators and equilibrium points. In section \ref{Invariant_Manifold} we construct the invariant manifold and in the Section \ref{Rate_of_Convergence_of_Attractors} we reduce the system to one dimensional in order to obtain the rate of convergence of attractors.

\section{Rate of Convergence of Resolvent and Equilibria}\label{Rate of Convergence of Resolvent and Equilibria}

In this section we will obtain the convergence of the resolvent operators and the convergence of equilibrium points. We will study the spectral behavior of the operators $A_\varepsilon$ and we will obtain estimates for the linear semigroups.

\begin{lem}\label{Rate_of_convergence}
For $g\in L^2(\Omega)$ with $\|g\|_{L^2}\leq 1$ and $\varepsilon\in (0,\varepsilon_0]$, let $u^\varepsilon$ be the solution of elliptic problem  
\begin{equation}
\begin{cases}
-\tn{div}(p_\varepsilon(x)\nabla u^\varepsilon)+(\lambda+V_\varepsilon(x))u^\varepsilon=g, \quad x\in \Omega,\\
\dfrac{\partial u^\varepsilon}{\partial \vec{n}}=0,\quad x\in \partial\Omega.
\end{cases}
\end{equation}
Then there is a constant $C>0$, independent of $\varepsilon$, such that
\begin{equation}\label{convergence_resolvent0}
\|u^\varepsilon-u^0\|_{X_\varepsilon^\frac{1}{2}}\leq C(\tau(\varepsilon)+p(\varepsilon)^{-\frac{1}{2}}),
\end{equation} 
where $u^0=\frac{Pg}{\lambda+V_0}$.
\end{lem}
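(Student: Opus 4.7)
The plan is to set $w^\varepsilon := u^\varepsilon - u^0$ and carry out the natural energy estimate in the $X_\varepsilon^\frac{1}{2}$-norm, arranging the argument so that the two promised error terms $\tau(\varepsilon)$ and $p(\varepsilon)^{-\frac{1}{2}}$ arise from two structurally different sources: the $L^p$-convergence of the potentials and the large-diffusion Poincaré improvement, respectively.

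Since $u^0 = Pg/(\lambda+V_0)$ is constant, $\nabla u^0 = 0$, and $(\lambda+V_0)u^0 = Pg$, a direct subtraction shows that $w^\varepsilon$ solves, weakly with homogeneous Neumann conditions,
\begin{equation*}
-\tn{div}(p_\varepsilon \nabla w^\varepsilon) + (\lambda+V_\varepsilon)w^\varepsilon = (g-Pg) - (V_\varepsilon-V_0)u^0.
\end{equation*}
Testing against $w^\varepsilon \in H^1(\Omega)$ yields the energy identity
\begin{equation*}
\|w^\varepsilon\|_{X_\varepsilon^\frac{1}{2}}^2 = \int_\Omega (g-Pg)w^\varepsilon\,dx - \int_\Omega (V_\varepsilon-V_0)u^0\, w^\varepsilon\,dx =: I_1 + I_2,
\end{equation*}
so the problem reduces to bounding each $I_j$ by a constant times the claimed rate times $\|w^\varepsilon\|_{X_\varepsilon^\frac{1}{2}}$, after which dividing by $\|w^\varepsilon\|_{X_\varepsilon^\frac{1}{2}}$ (or a standard absorption) finishes the proof.

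For $I_1$ I would exploit the orthogonality $\int_\Omega (g-Pg)\,dx = 0$ to replace $w^\varepsilon$ by $w^\varepsilon - Pw^\varepsilon$ inside the integral. Poincaré's inequality on $\Omega$ then gives $\|w^\varepsilon - Pw^\varepsilon\|_{L^2} \leq C\|\nabla w^\varepsilon\|_{L^2}$, and the large-diffusion hypothesis $p_\varepsilon \geq p(\varepsilon)$ yields $\|\nabla w^\varepsilon\|_{L^2}^2 \leq p(\varepsilon)^{-1}\|w^\varepsilon\|_{X_\varepsilon^\frac{1}{2}}^2$; together with $\|g\|_{L^2}\leq 1$ this produces $|I_1| \leq C\, p(\varepsilon)^{-\frac{1}{2}}\|w^\varepsilon\|_{X_\varepsilon^\frac{1}{2}}$. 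For $I_2$ I would note that $|u^0| \leq |Pg|/m_0 \leq C$, then apply Hölder with exponents $p,p'$ together with the $\varepsilon$-independent Sobolev embedding $H^1(\Omega)\hookrightarrow L^{p'}(\Omega)$ (which is valid in each dimension for the prescribed range of $p$) and the lower bound $m_0\|w^\varepsilon\|_{H^1}^2 \leq \|w^\varepsilon\|_{X_\varepsilon^\frac{1}{2}}^2$ coming from \eqref{non_equivalence_norm}, obtaining $|I_2| \leq C\tau(\varepsilon)\|w^\varepsilon\|_{X_\varepsilon^\frac{1}{2}}$.

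The hard part is purely the bookkeeping of the $\varepsilon$-dependence: every inequality must be routed through constants (Poincaré, Sobolev, the lower bounds $\lambda+V_\varepsilon \geq m_0$ and $p_\varepsilon \geq p(\varepsilon)$) that are either fixed by $\Omega$ or are precisely the quantities we want to appear in the final estimate, and one must never invoke the upper bound $M(\varepsilon)$ in \eqref{non_equivalence_norm} since that would destroy uniformity. The two structural inputs that make this work are the mean-zero property of $g-Pg$ and the clean splitting $(\lambda+V_\varepsilon)u^0 = Pg + (V_\varepsilon - V_0)u^0$, which isolate the two sources of error into $I_1$ and $I_2$.
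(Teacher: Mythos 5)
Your proposal is correct and is essentially the same argument the paper gives: both reduce to the energy identity $\|w^\varepsilon\|_{X_\varepsilon^{1/2}}^2 = \int_\Omega(g-Pg)w^\varepsilon\,dx - \int_\Omega(V_\varepsilon-V_0)u^0 w^\varepsilon\,dx$ (the paper arrives at this by pairing the two weak formulations with the test functions $u^\varepsilon-u^0$ and $Pu^\varepsilon-u^0$, whereas you subtract the equations first; the resulting identity is the same since $\int_\Omega g(I-P)u^\varepsilon\,dx = \int_\Omega(g-Pg)w^\varepsilon\,dx$), and both estimate the two terms via the mean-zero Poincaré inequality combined with $p_\varepsilon\ge p(\varepsilon)$, and via Hölder with the uniform Sobolev embedding and the lower bound $m_0\|\cdot\|_{H^1}^2\le\|\cdot\|_{X_\varepsilon^{1/2}}^2$. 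The only cosmetic difference is your choice of Hölder exponents $(p,p')$ where the paper uses Cauchy--Schwarz after embedding $L^p\subset L^2$; the rate obtained is identical.
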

\begin{proof}
The weak solution $u^\varepsilon$ satisfies
\begin{equation}\label{weak_solution}
\int_\Omega p_\varepsilon \nabla u^\varepsilon \nabla \varphi\,dx+\int_\Omega (\lambda+V_\varepsilon)u^\varepsilon \varphi\,dx=\int_\Omega g\varphi\,dx,\quad\forall\,\varphi\in X_\varepsilon^{\frac{1}{2}},\,\,\varepsilon\in(0,\varepsilon_0];
\end{equation}
\begin{equation}\label{weak_solution2}
\int_\Omega (\lambda+ V_0)u^0 \varphi\,dx=\int_\Omega Pg \varphi\,dx,\quad\forall\,\varphi\in X_0^{\frac{1}{2}}.
\end{equation}
With the aid of the projection $P$ we have
$$
\int_\Omega p_\varepsilon |\nabla u^\varepsilon|^2\,dx+\int_\Omega (\lambda+V_\varepsilon)u^\varepsilon(u^\varepsilon-u^0)\,dx=\int_\Omega g(u^\varepsilon-u^0)\,dx;
$$
$$
\int_\Omega (\lambda+V_0)u^0(Pu^\varepsilon-u^0)\,dx=\int_\Omega Pg(Pu^\varepsilon-u^0)\,dx,
$$
which implies
$$
\int_\Omega g(u^\varepsilon-u^0)\,dx-\int_\Omega Pg(Pu^\varepsilon-u^0)\,dx=\int_\Omega g(I-P)u^\varepsilon\,dx
$$
and
\begin{align*}
\int_\Omega p_\varepsilon |\nabla u^\varepsilon|^2\,dx &+\int_\Omega (\lambda+V_\varepsilon )u^\varepsilon(u^\varepsilon-u^0)\,dx-\int_\Omega (\lambda+V_0) u^0(Pu^\varepsilon-u^0)\,dx\\ 
&=\|u^\varepsilon-u^0\|_{X_\varepsilon^\frac{1}{2}}^2+\int_\Omega (V_\varepsilon-V_0)u^0(u^\varepsilon-u^0)\,dx.
\end{align*}
Therefore 
$$
\|u^\varepsilon-u^0\|_{X_\varepsilon^\frac{1}{2}}^2\leq \int_\Omega |V_\varepsilon-V_0||u^0||u^\varepsilon-u^0|\,dx+\int_\Omega |g(I-P)u^\varepsilon|\,dx.
$$
If $n=1$, we have $X_\varepsilon^\frac{1}{2}\subset H^1\subset L^\infty$, thus
$$
\int_\Omega |V_\varepsilon-V_0||u^0||u^\varepsilon-u^0|\,dx\leq C\|u^\varepsilon-u^0\|_{L^\infty}\|V^\varepsilon-V_0\|_{L^1}\leq C\|u^\varepsilon-u^0\|_{X_\varepsilon^\frac{1}{2}}\tau(\varepsilon).
$$
If $n\geq 2$, we have $L^p\subset L^2$, thus
$$
\int_\Omega |V_\varepsilon-V_0||u^0||u^\varepsilon-u^0|\,dx\leq C\|u^\varepsilon-u^0\|_{L^2}\|V^\varepsilon-V_0\|_{L^2}\leq C\|u^\varepsilon-u^0\|_{X_\varepsilon^\frac{1}{2}}\tau(\varepsilon).
$$
By Poincaré's inequality for average, we have
$$
\int_\Omega |g(I-P)u^\varepsilon|\,dx\leq \|g\|_{L^2}\Big(\int_\Omega |\nabla u^\varepsilon|^2\,dx\Big)^\frac{1}{2},
$$
but
$$
p(\varepsilon)\int_\Omega|\nabla u^\varepsilon|^2\,dx \leq\int_\Omega p_\varepsilon|\nabla u^\varepsilon-\nabla u^0|^2\,dx\leq \|u^\varepsilon-u^0\|_{X_\varepsilon^\frac{1}{2}}^2.
$$
Put this estimates together the result follows. 
\end{proof}

\begin{cor}\label{non_equivalence_norm1}
There is no positive constant $C$ independent of $\varepsilon$ such that 
$$
\|u\|_{X_\varepsilon^\frac{1}{2}}^2\leq C\|u\|_{H^1}^2\quad \forall\,u\in X_\varepsilon^\frac{1}{2}.
$$
\end{cor}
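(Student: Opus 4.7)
The plan is to proceed by contradiction, using only the hypothesis $p(\varepsilon)\to\infty$ and not Lemma \ref{Rate_of_convergence} itself. I would assume for contradiction that there is a constant $C>0$ independent of $\varepsilon$ such that $\|u\|_{X_\varepsilon^{1/2}}^2 \leq C\|u\|_{H^1}^2$ for every $u\in X_\varepsilon^{1/2}$.

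The key step is to test this inequality against a single, $\varepsilon$-independent function. I would pick $u_*\in H^1(\Omega)$ whose gradient does not vanish identically, for instance the restriction to $\Omega$ of any non-constant smooth function on $\overline{\Omega}$. Since $X_\varepsilon^{1/2}=H^1(\Omega)$ as a set for every $\varepsilon\in(0,\varepsilon_0]$, this $u_*$ is admissible in the supposed inequality. Using the explicit form of the inner product on $X_\varepsilon^{1/2}$ together with the pointwise lower bound $p_\varepsilon(x)\geq p(\varepsilon)$ I would obtain
\[
\|u_*\|_{X_\varepsilon^{1/2}}^2 \;\geq\; \int_\Omega p_\varepsilon |\nabla u_*|^2\,dx \;\geq\; p(\varepsilon)\,\|\nabla u_*\|_{L^2}^2.
\]
Combined with the assumed uniform upper bound, this would yield $p(\varepsilon)\,\|\nabla u_*\|_{L^2}^2 \leq C\,\|u_*\|_{H^1}^2$, a finite quantity independent of $\varepsilon$. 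Since $\|\nabla u_*\|_{L^2}^2>0$ by construction, this forces $p(\varepsilon)$ to remain bounded, contradicting the standing hypothesis $p(\varepsilon)\to\infty$.

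There is essentially no obstacle in the argument; it is a one-line contradiction once the right test is set up. The only subtlety is to \emph{fix} the test function once and for all, independently of $\varepsilon$, so that its $H^1$ norm does not collapse as $\varepsilon\to 0$; otherwise both sides could tend to zero simultaneously and no contradiction would arise. In fact the same computation shows a bit more: for any non-constant $u_*\in H^1(\Omega)$, the ratio $\|u_*\|_{X_\varepsilon^{1/2}}^2/\|u_*\|_{H^1}^2$ diverges as $\varepsilon\to 0$, which confirms that the quantity $M(\varepsilon)$ appearing in \eqref{non_equivalence_norm} cannot be chosen independently of $\varepsilon$.
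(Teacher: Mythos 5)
Your proof is correct, and it takes a genuinely simpler route than the paper's. The paper works with the $\varepsilon$-dependent function $v^\varepsilon = (I-P)u^\varepsilon$ extracted from the resolvent lemma, then invokes Poincar\'e's inequality for mean-zero functions to absorb the $L^2$ term into the gradient term before reading off $p(\varepsilon)\|v^\varepsilon\|_{H^1}^2 \leq C\|v^\varepsilon\|_{H^1}^2$; that argument is correct in spirit but requires the additional (unstated) check that $v^\varepsilon$ does not vanish for a sequence $\varepsilon\to 0$, and it drags in the elliptic lemma unnecessarily. You instead fix a single non-constant $u_*\in H^1(\Omega)$ once and for all, use only the pointwise bound $p_\varepsilon\geq p(\varepsilon)$ together with $\lambda+V_\varepsilon\geq m_0>0$ to obtain $\|u_*\|_{X_\varepsilon^{1/2}}^2\geq p(\varepsilon)\|\nabla u_*\|_{L^2}^2$, and conclude immediately. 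This is self-contained, avoids Poincar\'e and the resolvent lemma entirely, and the positivity of $\|\nabla u_*\|_{L^2}$ is guaranteed by construction rather than needing a separate verification. Both arguments land on the same contradiction ($p(\varepsilon)$ bounded), but yours is the cleaner and more elementary one; you also correctly identify the real subtlety, namely that the test function must be fixed independently of $\varepsilon$.
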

\begin{proof}
If there is such a constant $C$ take $v^\varepsilon=(I-P)u^\varepsilon$ as given by the Lemma \ref{rate_of_resolvent}, thus by Poincaré's inequality for average, we have
\begin{align*}
p(\varepsilon)\|v^\varepsilon\|_{H^1}^2 & \leq p(\varepsilon)\int_\Omega |\nabla v^\varepsilon|^2\,dx+\int_\Omega |v^\varepsilon|^2\,dx\\
& \leq Cp(\varepsilon)\int_\Omega |\nabla v^\varepsilon|^2\,dx\\
&\leq C\|v^\varepsilon\|_{X_\varepsilon^\frac{1}{2}}^2\leq C\|v^\varepsilon\|_{H^1}^2.
\end{align*}
\end{proof}

The convergence of the resolvent operators can be stated as follows.  
\begin{cor}\label{rate_of_resolvent}
There is a positive constant $C$ independent of $\varepsilon$ such that
\begin{equation}
\|A_\varepsilon^{-1}-A_0^{-1}P\|_{\LL(L^2),X_\varepsilon^\frac{1}{2})}\leq C(\tau(\varepsilon)+p(\varepsilon)^{-\frac{1}{2}}).
\end{equation} 
Furthermore, if $\mu \in \rho(-A_0)\cap \rho(-A_\varepsilon)$ is such that Re$(\mu)\notin (-\infty,\bar{\lambda}]$, where $\bar{\lambda}= \lambda+V_0$, there is $\phi\in (\frac{\pi}{2},\pi)$ such that for all $\mu \in\Sigma_{\bar{\lambda},\phi}=\{\mu\in\mathbb{C}\,:\,|\tn{arg}(\mu+\bar{\lambda})|\leq \phi \}\setminus \{\mu\in\mathbb{C}:|\mu+\bar{\lambda}|\leq r\}$, for some $r>0$,  
\begin{equation}
\|(\mu+A_\varepsilon)^{-1}-(\mu+A_0)^{-1}P\|_{\LL(L^2,X_\varepsilon^{\frac{1}{2}})}\leq C(\tau(\varepsilon)+p(\varepsilon)^{-\frac{1}{2}}).
\end{equation}
\end{cor}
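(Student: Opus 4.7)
The first inequality is an immediate reformulation of Lemma \ref{Rate_of_convergence}: for $g\in L^2(\Omega)$ with $\|g\|_{L^2}\le 1$ the functions $u^\varepsilon=A_\varepsilon^{-1}g$ and $u^0=A_0^{-1}Pg=Pg/(\lambda+V_0)$ are exactly the pair treated by that lemma, and taking the supremum over such $g$ gives the operator-norm bound.

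For the shifted resolvent I would reduce to the case $\mu=0$ by a resolvent identity. Set $v^\varepsilon=(\mu+A_\varepsilon)^{-1}g$ and $v^0=(\mu+A_0)^{-1}Pg$. From $A_\varepsilon v^\varepsilon=g-\mu v^\varepsilon$ and $A_0v^0=Pg-\mu v^0$ (with $Pv^0=v^0$), subtracting yields
\begin{equation*}
v^\varepsilon-v^0=[A_\varepsilon^{-1}-A_0^{-1}P](g-\mu v^\varepsilon)-\mu A_0^{-1}P(v^\varepsilon-v^0).
\end{equation*}
Because $A_0^{-1}P$ acts as multiplication by $\bar\lambda^{-1}$ on the range of $P$ and vanishes on its kernel, $I+\mu A_0^{-1}P$ is invertible whenever $\mu\neq-\bar\lambda$ with inverse $R(\mu):=\frac{\bar\lambda}{\mu+\bar\lambda}P+(I-P)$. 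Solving and using $g-\mu v^\varepsilon=A_\varepsilon(\mu+A_\varepsilon)^{-1}g$ produces the key factorization
\begin{equation*}
(\mu+A_\varepsilon)^{-1}-(\mu+A_0)^{-1}P=R(\mu)\,[A_\varepsilon^{-1}-A_0^{-1}P]\,A_\varepsilon(\mu+A_\varepsilon)^{-1}.
\end{equation*}

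Each of the three factors is then estimated separately, all uniformly in $\varepsilon$. Since $P$ is orthogonal on $X_\varepsilon^{1/2}$, $\|R(\mu)\|_{\LL(X_\varepsilon^{1/2})}\le\max(1,|\bar\lambda/(\mu+\bar\lambda)|)$, a constant depending only on $\mu$ and the sector. The middle factor is exactly controlled by the first inequality already proved. For the right-hand factor, $A_\varepsilon(\mu+A_\varepsilon)^{-1}=I-\mu(\mu+A_\varepsilon)^{-1}$, and because each $A_\varepsilon$ is self-adjoint with spectrum contained in $[m_0,\infty)$ uniformly in $\varepsilon$, one has a classical sectorial estimate $\|(\mu+A_\varepsilon)^{-1}\|_{\LL(L^2)}\le C_{\phi,r}$ for $\mu\in\Sigma_{\bar\lambda,\phi}\setminus B_r(-\bar\lambda)$, independent of $\varepsilon$. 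Chaining the three bounds produces the desired inequality.

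The main technical point is this uniformity in $\varepsilon$ of the sectorial estimate, and it rests on the standing hypothesis $\mathrm{ess}\inf_{x\in\Omega}(\lambda+V_\varepsilon(x))\ge m_0$ together with positivity of $p_\varepsilon$, which together force all the self-adjoint operators $A_\varepsilon$ to share a common spectral lower bound $m_0$. Once that is observed, everything else is algebraic manipulation of resolvent identities together with an invocation of Lemma \ref{Rate_of_convergence}.
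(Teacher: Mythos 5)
Your treatment of the first bound is correct and is exactly what the paper leaves implicit: the corollary appears with no proof, the unshifted estimate being simply the operator-norm reformulation of Lemma \ref{Rate_of_convergence}, while the sectorial version is asserted without argument. The factorization you derive,
\[
(\mu+A_\varepsilon)^{-1}-(\mu+A_0)^{-1}P
= R(\mu)\,\bigl[A_\varepsilon^{-1}-A_0^{-1}P\bigr]\,A_\varepsilon(\mu+A_\varepsilon)^{-1},
\qquad R(\mu)=\tfrac{\bar\lambda}{\mu+\bar\lambda}P+(I-P),
\]
is algebraically correct (I verified both the resolvent identity and that $R(\mu)$ inverts $I+\mu A_0^{-1}P$), and this reduction to $\mu=0$ is a clean way to supply the missing details.

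One step does not close as written. You invoke the sectorial estimate in the constant form $\|(\mu+A_\varepsilon)^{-1}\|_{\LL(L^2)}\le C_{\phi,r}$ and then pass to $A_\varepsilon(\mu+A_\varepsilon)^{-1}=I-\mu(\mu+A_\varepsilon)^{-1}$; this gives only $\|A_\varepsilon(\mu+A_\varepsilon)^{-1}\|_{\LL(L^2)}\le 1+|\mu|\,C_{\phi,r}$, which grows without bound as $|\mu|\to\infty$ inside the sector, so the chain of three bounds does not yield a $\mu$-uniform constant. The repair is immediate: either use the genuine sectorial estimate in its decaying form $\|(\mu+A_\varepsilon)^{-1}\|_{\LL(L^2)}\le C/|\mu+\bar\lambda|$ on $\Sigma_{\bar\lambda,\phi}\setminus B_r(-\bar\lambda)$ (which self-adjointness and $\sigma(A_\varepsilon)\subset[m_0,\infty)$ do supply), or bound the third factor directly by spectral calculus, $\|A_\varepsilon(\mu+A_\varepsilon)^{-1}\|_{\LL(L^2)}=\sup_{s\in\sigma(A_\varepsilon)}|s/(\mu+s)|\le\sup_{s\ge m_0}|s/(\mu+s)|$, a finite constant depending only on $\phi$, $r$, $m_0$, $\bar\lambda$. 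A secondary caveat: $P$ is in fact \emph{not} orthogonal in the $X_\varepsilon^{1/2}$ inner product, since $\pin{Pu,(I-P)v}_{X_\varepsilon^{1/2}}=Pu\int_\Omega(V_\varepsilon-V_0)(I-P)v\,dx$ need not vanish; your bound on $\|R(\mu)\|_{\LL(X_\varepsilon^{1/2})}$ should therefore rest on the $\varepsilon$-uniform boundedness of $\|P\|_{\LL(X_\varepsilon^{1/2})}$ (which holds because $\|Pu\|_{X_\varepsilon^{1/2}}^2=|Pu|^2\int_\Omega(\lambda+V_\varepsilon)\,dx\le C\|u\|_{L^2}^2\le C'\|u\|_{X_\varepsilon^{1/2}}^2$), rather than on orthogonality. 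With these corrections the argument is complete.
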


The spectral behavior of $A_\varepsilon$ can be seen in the next result. 
\begin{prop}\label{spectral_properties} We denote $\bar{\lambda}=\lambda+V_0$ the eigenvalue of $A_0$ and we denote for $\varepsilon\in (0,\varepsilon_0]$ the ordered spectrum  $\sigma(A_\varepsilon)=\{\lambda_1^\varepsilon<\lambda_2^\varepsilon<...\}$.
\begin{itemize}
\item[(i)] Given $\delta>0$, there is $\varepsilon$ sufficiently small (we still denote $\varepsilon\in (0,\varepsilon_0]$) such that the operators
$$
Q_\varepsilon(\bar{\lambda})=\frac{1}{2\pi i}\int_{|\xi+\bar{\lambda}|=\delta}(\xi+A_\varepsilon)^{-1}\,d\xi,\quad \varepsilon\in[0,\varepsilon_0],
$$ 
are projections on $X_\varepsilon^\frac{1}{2}$ and $Q_\varepsilon(\bar{\lambda})\longrightarrow Q_0(\bar{\lambda})=I_{X_0^\frac{1}{2}}$. More precisely,
\begin{equation}\label{projection_convergence}
\|Q_\varepsilon(\bar{\lambda})-P\|_{\LL(L^2,X_\varepsilon^\frac{1}{2})}\leq C(\tau(\varepsilon)+p(\varepsilon)^{-\frac{1}{2}}),
\end{equation}
for some constant $C$ independent of $\varepsilon$. Furthermore all  eigenspaces $W_\varepsilon(\bar{\lambda})=Q_\varepsilon(\bar{\lambda})X_\varepsilon^\frac{1}{2}$ are the same dimension, that is, for $\varepsilon$ sufficiently small,
$$
\tn{rank}(Q_\varepsilon(\bar{\lambda}))=\tn{dim}(W_\varepsilon(\bar{\lambda}))=\tn{dim}(X_0^\frac{1}{2})=1.
$$
\item[(ii)]For each $u^0\in W_0(\bar{\lambda})=X_0^\frac{1}{2}$ there is a sequence $(u^{\varepsilon_k})_k$ such that $u^{\varepsilon_k}\in W_{\varepsilon_k}(\bar{\lambda})$, $\varepsilon_k\overset{k\to\infty}\longrightarrow 0$ and $u^{\varepsilon_k}\to u^0$ in $X_\varepsilon^\frac{1}{2}$. Also, given sequences $\varepsilon_k\overset{k\to\infty}\longrightarrow 0$ and $(u^{\varepsilon_k})_k$ with $u^{\varepsilon_k}\in W_{\varepsilon_k}(\bar{\lambda})$ and $\|u^{\varepsilon_k}\|_{X_\varepsilon^\frac{1}{2}}=1$, $k\in\N$, each subsequence $(u^{\varepsilon_{k_l}})_l$ of $(u^{\varepsilon_k})_k$ has a  convergent subsequence for some $u^0\in W_0(\bar{\lambda})$. Furthermore, if we denote $W_\varepsilon^1(\bar{\lambda})=\{z\in W_\varepsilon(\bar{\lambda})\,;\,\|z\|_{X_\varepsilon^\frac{1}{2}}\leq 1\}$, then
\begin{equation}\label{eigenspace_convergence}
\tn{d}_H(W^1_\varepsilon(\bar{\lambda}),W^1_0(\bar{\lambda}))\leq C (\tau(\varepsilon)+p(\varepsilon)^{-\frac{1}{2}}).
\end{equation}
\item[(iii)]Given $R>0$, for $\varepsilon$ sufficiently small (we still denote $\varepsilon\in (0,\varepsilon_0]$), such that, $|\lambda_1^\varepsilon-\bar{\lambda}|\leq R$ and $|\lambda_2^\varepsilon|>R$. Furthermore
\begin{equation}\label{eigenvalue_convergence}
C p(\varepsilon)\leq \lambda^j_\varepsilon,\quad j\in\{2,3,...\}.
\end{equation}
\end{itemize}
\end{prop}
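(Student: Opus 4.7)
\textbf{Proof plan for Proposition~\ref{spectral_properties}.} The strategy is to deduce all three assertions from Corollary~\ref{rate_of_resolvent} together with standard perturbation theory of spectral projections and a Poincar\'e argument for the tail of the spectrum.

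For part (i), I would fix $\delta>0$ small enough that the closed disk $\{|\xi+\bar\lambda|\le\delta\}$ meets $\sigma(-A_0)=\{-\bar\lambda\}$ only at its center. On the boundary circle $|\mu+\bar\lambda|=\delta$ one has $\mu\in\rho(-A_0)$, and Corollary~\ref{rate_of_resolvent} gives
$$
\|(\mu+A_\varepsilon)^{-1}-(\mu+A_0)^{-1}P\|_{\LL(L^2,X_\varepsilon^{1/2})}\le C\bigl(\tau(\varepsilon)+p(\varepsilon)^{-1/2}\bigr)
$$
uniformly in $\mu$ on the circle; in particular, the circle lies in $\rho(-A_\varepsilon)$ for small $\varepsilon$, so $Q_\varepsilon(\bar\lambda)$ is well defined, and integrating this estimate yields \eqref{projection_convergence}. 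Operator-norm convergence of projections then forces $\mathrm{rank}(Q_\varepsilon(\bar\lambda))=\mathrm{rank}(P)=1$ for $\varepsilon$ small by the classical fact that two projections at distance less than one are similar.

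For part (ii), the approximation statement is obtained by setting $u^\varepsilon:=Q_\varepsilon(\bar\lambda)u^0\in W_\varepsilon(\bar\lambda)$; since $Pu^0=u^0$ one has $u^\varepsilon-u^0=(Q_\varepsilon(\bar\lambda)-P)u^0$ and part (i) supplies the rate. For the compactness claim, note that $\|u^{\varepsilon_k}\|_{X_{\varepsilon_k}^{1/2}}=1$ together with \eqref{non_equivalence_norm} gives a uniform $L^2$-bound, so by \eqref{projection_convergence} and $u^{\varepsilon_k}=Q_{\varepsilon_k}(\bar\lambda)u^{\varepsilon_k}$,
$$
\|(I-P)u^{\varepsilon_k}\|_{X_{\varepsilon_k}^{1/2}}=\|(Q_{\varepsilon_k}(\bar\lambda)-P)u^{\varepsilon_k}\|_{X_{\varepsilon_k}^{1/2}}\longrightarrow 0;
$$
since $Pu^{\varepsilon_k}$ sits in the one-dimensional space $W_0(\bar\lambda)$ and is bounded, it has a convergent subsequence, and the two pieces assemble into a limit in $W_0(\bar\lambda)$. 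For \eqref{eigenspace_convergence} I would combine this bound with its symmetric counterpart (applying $Q_\varepsilon(\bar\lambda)$ to unit vectors in $W_0^1(\bar\lambda)$), after a small renormalization to land in the unit balls.

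For part (iii), by (i) the only point of $\sigma(A_\varepsilon)$ inside $\{|\mu+\bar\lambda|\le\delta\}$ is a simple eigenvalue, necessarily $\lambda_1^\varepsilon$, with $|\lambda_1^\varepsilon-\bar\lambda|\le\delta$; choosing $\delta<R$ handles the first estimate. For $\lambda_j^\varepsilon\ge cp(\varepsilon)$, $j\ge 2$, I would use the Rayleigh-quotient characterization together with the fact, furnished by (ii), that the normalized first eigenfunction of $A_\varepsilon$ converges to $|\Omega|^{-1/2}$ in $L^2$. Consequently any $u\in L^2(\Omega)$ orthogonal in $L^2$ to this eigenfunction satisfies $\|Pu\|_{L^2}=o(1)\|u\|_{L^2}$ and hence $\|(I-P)u\|_{L^2}\ge\tfrac12\|u\|_{L^2}$ for small $\varepsilon$; Poincar\'e's inequality for averages then gives $\|\nabla u\|_{L^2}^2\ge c\|u\|_{L^2}^2$, and combined with $\langle A_\varepsilon u,u\rangle\ge p(\varepsilon)\|\nabla u\|_{L^2}^2$ this yields $\lambda_2^\varepsilon\ge cp(\varepsilon)$; monotonicity of the ordered spectrum extends the bound to every $\lambda_j^\varepsilon$ with $j\ge 2$. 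The main obstacle I expect is in this last step: one has to bridge the gap between $L^2$-orthogonality to the first eigenfunction (which is only close to, not equal to, a constant) and membership in the range of $I-P$, where Poincar\'e produces the crucial $p(\varepsilon)$-scaling; everything else reduces to contour-integral estimates and dimension counting once the quantitative resolvent convergence of Corollary~\ref{rate_of_resolvent} is in hand.
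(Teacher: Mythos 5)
Your plan is essentially the paper's proof: the contour integral of the resolvent difference from Corollary~\ref{rate_of_resolvent} gives \eqref{projection_convergence}, the estimate $\|Q_\varepsilon(\bar\lambda)z_0-z_0\|_{X_\varepsilon^{1/2}}=\|(Q_\varepsilon(\bar\lambda)-P)z_0\|_{X_\varepsilon^{1/2}}$ on unit vectors $z_0\in W_0^1(\bar\lambda)$ gives \eqref{eigenspace_convergence}, and the Rayleigh-quotient bound $\pin{w,w}_{X_\varepsilon^{1/2}}\ge p(\varepsilon)\int_\Omega|\nabla w|^2$ for $w\perp_{L^2}\varphi_1^\varepsilon$ gives \eqref{eigenvalue_convergence}. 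The only deviation is in the last step of (iii): you derive the lower bound on the constrained Dirichlet energy from near-constancy of $\varphi_1^\varepsilon$ plus Poincar\'e for averages, whereas the paper sets $\mu_{2,\varepsilon}=\inf\{\int_\Omega|\nabla w|^2 : \|w\|_{L^2}=1,\ w\perp\varphi_1^\varepsilon\}$ and cites \cite{Rodriguez-Bernal2005} for $\mu_{2,\varepsilon}\to\mu_2>0$ --- a cosmetic difference, with your version slightly more self-contained.
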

\begin{proof}
The proof is the same as given in \cite{Carbone2008} and \cite{Carvalho2010a}. Here we just need to proof the estimates \eqref{projection_convergence}, \eqref{eigenspace_convergence} and \eqref{eigenvalue_convergence}. But \eqref{projection_convergence} is immediately from definition of $Q_\varepsilon(\bar{\lambda})$ and by Corollary \ref{rate_of_resolvent}. Indeed
\begin{align*}
\|Q_\varepsilon(\bar{\lambda})-P\|_{\LL(L^2,X_\varepsilon^{\frac{1}{2}})}&=\Big\|\dfrac{1}{2\pi i} \int_{|z-\bar{\lambda}|=\delta} (z+A_\varepsilon)^{-1}-(z+A_0)^{-1}P\, dz\Big\|_{\LL(L^2,X_\varepsilon^{\frac{1}{2}})} \\
& \leq \dfrac{1}{2\pi} \int_{|z-\bar{\lambda}|=\delta} \|(z+A_\varepsilon)^{-1}-(z+A_0)^{-1}P\|_{\LL(L^2,X_\varepsilon^{\frac{1}{2}})}\, |dz|\\
&\leq C(\tau(\varepsilon)+p(\varepsilon)^{-\frac{1}{2}}).
\end{align*}
For \eqref{eigenspace_convergence} let $z_0\in X_0^\frac{1}{2}$ with $\|z_0\|_{X_0^\frac{1}{2}}\leq 1$, then 
$$
\tn{dist}(Q_\varepsilon(\bar{\lambda})z_0,W_0^1(\bar{\lambda}))\leq \|Q_\varepsilon(\bar{\lambda})z_0-z_0\|_{X_\varepsilon^\frac{1}{2}}\leq C(\tau(\varepsilon)+p(\varepsilon)^{-\frac{1}{2}});
$$
$$
\tn{dist}(z_0,W_\varepsilon^1(\bar{\lambda}))\leq \|z_0-Q_\varepsilon(\bar{\lambda})z_0\|_{X_\varepsilon^\frac{1}{2}}\leq C(\tau(\varepsilon)+p(\varepsilon)^{-\frac{1}{2}}),
$$
and the result follows. Finally for \eqref{eigenvalue_convergence}, let $\varphi_1^\varepsilon$ be the associated eigenfunction to the first eigenvalue $\lambda_1^\varepsilon\in \sigma(A_\varepsilon) $. Let $w\in X_\varepsilon^\frac{1}{2}\setminus \{0\}$ such that $w\perp \varphi_1^\varepsilon$ in $L^2(\Omega)$. We have
$$
\frac{\pin{w,w}_{X_\varepsilon^\frac{1}{2}}}{\int_\Omega |w|^2\,dx}\geq \frac{p(\varepsilon)\int_\Omega|\nabla w|^2\,dx}{\int_\Omega |w|^2\,dx},
$$
minimizing and setting $\mu_{2,\varepsilon}=\inf\{\int_\Omega|\nabla w|^2\,dx\,;\,w\in H^1\setminus\{0\},\,\|w\|_{L^2}=1\,w\perp \varphi_1^\varepsilon\}$, by min-max characterization,
$$
\lim_{\varepsilon\to 0}\frac{\lambda_\varepsilon^2}{p(\varepsilon)}\geq \lim_{\varepsilon\to 0}\mu_{2,\varepsilon},
$$
but $\mu_{2,\varepsilon}\overset{\varepsilon\to 0}\longrightarrow \mu_2$, where $\mu_2$ is the first non zero eigenvalue of the operator $-\Delta$ subject to Neumann homogeneous boundary condition (see \cite{Rodriguez-Bernal2005}). 
\end{proof}

The next result exhibit estimates that plays an important role in the existence of invariant manifolds in the next section.
\begin{prop}\label{Proposition_linear_estimates}
Choose $\delta>0$ and $R>2+\bar{\lambda}$ according with (iv) of Proposition \ref{spectral_properties} and denote
$$
Y_\varepsilon=Q_\varepsilon(\bar{\lambda})X_\varepsilon^\frac{1}{2}\quad\tn{and}\quad Z_\varepsilon=(I-Q_\varepsilon(\bar{\lambda}))X_\varepsilon^\frac{1}{2},\quad \varepsilon\in [0,\varepsilon_0]
$$
and define projected operators
$$
A_\varepsilon^+=A_\varepsilon|_{Y_\varepsilon}\quad\tn{and}\quad A_\varepsilon^-=A_\varepsilon|_{Z_\varepsilon},\quad \varepsilon\in [0,\varepsilon_0].
$$
Then the following estimates are true,
\begin{itemize}
\item[(i)]$\|e^{-A_\varepsilon^+  t}z\|_{X_\varepsilon^\frac{1}{2}}\leq M e^{-\gamma t}\|z\|_{X_\varepsilon^\frac{1}{2}},\quad t\leq 0,\quad z\in Y_\varepsilon;$
\item[(ii)]$\|e^{-A_\varepsilon^+ t}-e^{A_0^+ t}P\|_{\LL(L^2,X_\varepsilon^\frac{1}{2})}\leq Me^{-\gamma t}(\tau(\varepsilon)+p(\varepsilon)^{-\frac{1}{2}}), \quad t\leq 0;$
\item[(iii)] $\|e^{-A^-_\varepsilon  t}z\|_{X_\varepsilon^\frac{1}{2}}\leq Me^{-\beta(\varepsilon) t}\|z\|_{X_\varepsilon^\frac{1}{2}} ,\quad t> 0,\quad z\in Z_\varepsilon,$
\end{itemize}
where $\gamma=\bar{\lambda}+1$, $\beta(\varepsilon)=\lambda_\varepsilon^2$ and $M$ is a constant independent of $\varepsilon$.
\end{prop}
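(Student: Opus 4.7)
The plan is to leverage the spectral decomposition established in Proposition \ref{spectral_properties} together with the Dunford functional calculus, reducing each estimate to facts about the scalar spectrum of $A_\varepsilon^\pm$ and the resolvent convergence of Corollary \ref{rate_of_resolvent}.

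For item (i), I would exploit the fact that, by Proposition \ref{spectral_properties}(i), the subspace $Y_\varepsilon$ is one-dimensional and spanned by the single eigenfunction $\varphi_1^\varepsilon$ associated to $\lambda_1^\varepsilon$, and that $|\lambda_1^\varepsilon - \bar{\lambda}|<\delta$ for $\varepsilon$ sufficiently small. Thus $e^{-A_\varepsilon^+ t}$ is just scalar multiplication by $e^{-\lambda_1^\varepsilon t}$. Choosing $\delta<1/2$ guarantees $\lambda_1^\varepsilon < \bar{\lambda}+1=\gamma$, and for $t\leq 0$ we obtain $e^{-\lambda_1^\varepsilon t}=e^{\lambda_1^\varepsilon|t|}\leq e^{\gamma|t|}=e^{-\gamma t}$, which yields (i) with $M=1$ (or a harmless multiplicative constant after writing the $X_\varepsilon^{1/2}$ norm on $Y_\varepsilon$).

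For item (ii), which is the main obstacle, I would use the Dunford–Riesz representation
\begin{equation*}
e^{-A_\varepsilon^+ t}=\frac{1}{2\pi i}\int_{|\xi+\bar{\lambda}|=\delta} e^{-\xi t}(\xi+A_\varepsilon)^{-1}\,d\xi,
\end{equation*}
valid for $\varepsilon\in[0,\varepsilon_0]$ small because the only spectral point of $A_\varepsilon$ inside the disk of radius $\delta$ around $-\bar{\lambda}$ is $\lambda_1^\varepsilon$ (by Proposition \ref{spectral_properties}(i) and (iii)); the analogous formula for $A_0^+ P$ uses $(\xi+A_0)^{-1}P$. Subtracting the two integrals and applying Corollary \ref{rate_of_resolvent} termwise gives
\begin{equation*}
\|e^{-A_\varepsilon^+ t}-e^{-A_0^+ t}P\|_{\LL(L^2,X_\varepsilon^{1/2})} \leq \frac{1}{2\pi}\int_{|\xi+\bar{\lambda}|=\delta}|e^{-\xi t}|\,C(\tau(\varepsilon)+p(\varepsilon)^{-1/2})\,|d\xi|.
\end{equation*}
On the contour one has $\tn{Re}(\xi)\geq -\bar{\lambda}-\delta$, so for $t\leq 0$ the factor $|e^{-\xi t}|=e^{-\tn{Re}(\xi)t}$ is bounded by $e^{(\bar{\lambda}+\delta)|t|}\leq e^{\gamma|t|}=e^{-\gamma t}$ provided $\delta<1$. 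The length of the contour is $2\pi\delta$, so integrating produces the desired bound. The subtlety to check is that the Dunford formula genuinely recovers $e^{-A_\varepsilon^+ t}$ on $Y_\varepsilon$ in the $X_\varepsilon^{1/2}$–topology uniformly in $\varepsilon$; this follows because the small contour is compactly contained in the joint resolvent set and Corollary \ref{rate_of_resolvent} already provides bounds in the $\LL(L^2,X_\varepsilon^{1/2})$ operator norm, not just in $L^2$.

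For item (iii), I would use that $A_\varepsilon$ is self-adjoint, positive, with pure point spectrum $\{\lambda_1^\varepsilon<\lambda_2^\varepsilon<\cdots\}$, and that $A_\varepsilon^-$ is the restriction to the spectral subspace $Z_\varepsilon$ whose spectrum is $\{\lambda_2^\varepsilon,\lambda_3^\varepsilon,\dots\}$, all bounded below by $\beta(\varepsilon)=\lambda_2^\varepsilon$. The spectral theorem yields $\|e^{-A_\varepsilon^- t}z\|_{L^2}\leq e^{-\beta(\varepsilon)t}\|z\|_{L^2}$ for $t>0$, $z\in Z_\varepsilon$. To pass to the $X_\varepsilon^{1/2}$ norm, observe that the $X_\varepsilon^{1/2}$ inner product equals $\pin{A_\varepsilon^{1/2}\cdot,A_\varepsilon^{1/2}\cdot}_{L^2}$, and $e^{-A_\varepsilon^- t}$ commutes with $(A_\varepsilon^-)^{1/2}$; therefore
\begin{equation*}
\|e^{-A_\varepsilon^- t}z\|_{X_\varepsilon^{1/2}}=\|(A_\varepsilon^-)^{1/2}e^{-A_\varepsilon^- t}z\|_{L^2}=\|e^{-A_\varepsilon^- t}(A_\varepsilon^-)^{1/2}z\|_{L^2}\leq e^{-\beta(\varepsilon)t}\|z\|_{X_\varepsilon^{1/2}},
\end{equation*}
proving (iii) with $M=1$. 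The uniformity in $\varepsilon$ of the constant $M$ in (i)–(iii) is immediate from the argument since all constants produced are absolute or depend only on the fixed contour radius $\delta$ and on the $C$ from Corollary \ref{rate_of_resolvent}.
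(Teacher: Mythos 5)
Your overall strategy matches the paper's: represent the projected semigroup by a Dunford--Riesz contour integral around $-\bar\lambda$, feed in the resolvent estimate of Corollary \ref{rate_of_resolvent}, and use the spectral gap $\lambda_2^\varepsilon$ for the complementary part. In (i) you instead observe that $Y_\varepsilon$ is one-dimensional, spanned by $\varphi_1^\varepsilon$, so $e^{-A_\varepsilon^+ t}$ is literally multiplication by $e^{-\lambda_1^\varepsilon t}$ and the bound is immediate once $|\lambda_1^\varepsilon-\bar\lambda|<\delta<1$. That is a clean, more elementary alternative to the paper's rectangular contour $\Gamma$, and buys you the explicit constant $M=1$. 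Your item (iii) is essentially the paper's spectral-sum computation recast via $\|\cdot\|_{X_\varepsilon^{1/2}}=\|A_\varepsilon^{1/2}\cdot\|_{L^2}$, and both give $M=1$.

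There is a sign slip in (ii) that should be corrected, although it does not affect the conclusion. With the paper's convention $Q_\varepsilon(\bar\lambda)=\tfrac{1}{2\pi i}\int_{|\xi+\bar\lambda|=\delta}(\xi+A_\varepsilon)^{-1}\,d\xi$, the resolvent $(\xi+A_\varepsilon)^{-1}$ has its poles at $\xi=-\lambda_j^\varepsilon$, and the contour representation of the projected backward semigroup must read
$$
e^{-A_\varepsilon^+ t}=\frac{1}{2\pi i}\int_{|\xi+\bar\lambda|=\delta}e^{\xi t}\,(\xi+A_\varepsilon)^{-1}\,d\xi,
$$
with $e^{\xi t}$, not $e^{-\xi t}$: the residue at $\xi=-\lambda_1^\varepsilon$ must produce $e^{-\lambda_1^\varepsilon t}$, whereas your integrand would return $e^{\lambda_1^\varepsilon t}$, i.e.\ the wrong semigroup. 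Relatedly, the bound you quote, $|e^{-\xi t}|\le e^{(\bar\lambda+\delta)|t|}$, is not the sharp supremum for your integrand on the circle (for $t\le 0$ it is in fact $\le 1$); it just happens to be a valid overestimate, which is why the final inequality still looks right. Once the exponent is corrected, $|e^{\xi t}|=e^{\operatorname{Re}(\xi)\,t}\le e^{(\bar\lambda+\delta)|t|}\le e^{-\gamma t}$ on the contour for $t\le 0$ and $\delta<1$, and the rest of your termwise estimate via Corollary \ref{rate_of_resolvent} goes through exactly as planned, filling in what the paper dispatches with the remark that the same contour argument as (i), combined with Lemma \ref{Rate_of_convergence}, proves (ii).
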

\begin{proof}
We have $\bar{\lambda}>0$ and for $\varepsilon$ sufficiently small we can construct the curve $\Gamma=\Gamma_1+\Gamma_2+\Gamma_3+\Gamma_4$, where  
$$
\Gamma_1=\{\mu\in\mathbb{C}\,;\,\tn{Re}(\mu)=-\bar{\lambda}+1\tn{ and }|\tn{Im}(\mu)|\leq 1\},
$$
$$
\Gamma_2=\{\mu\in\mathbb{C}\,;\,-\bar{\lambda}-1 \leq\tn{Re}(\mu)\leq-\bar{\lambda}+1\tn{ and }\tn{Im}(\mu)=1\},
$$
$$
\Gamma_3=\{\mu\in\mathbb{C}\,;\,\tn{Re}(\mu)=-\bar{\lambda}-1\tn{ and }|\tn{Im}(\mu)|\leq 1\},
$$
$$
\Gamma_4=\{\mu\in\mathbb{C}\,;\,-\bar{\lambda}-1 \leq\tn{Re}(\mu)\leq-\bar{\lambda}+1\tn{ and }\tn{Im}(\mu)=-1\}.
$$
Thus, for $z\in Y_\varepsilon$ and $t<0$, we  have
\begin{align*}
\|e^{-A_\varepsilon^+  t}z\|_{X_\varepsilon^\frac{1}{2}} & = \|e^{-A_\varepsilon t}Q_\varepsilon(\bar{\lambda}) z\|_{X_\varepsilon^\frac{1}{2}}\leq\frac{1}{2\pi}\int_\Gamma \|(\mu+A_\varepsilon)^{-1}e^{\mu t}z\|_{X_\varepsilon^\frac{1}{2}}\,d\mu\\
& \leq M_1 \int_\Gamma |e^{\mu t}|\,|d\mu| \|z\|_{X_\varepsilon^\frac{1}{2}}\leq M_2 \sup_{\mu\in\Gamma} e^{\tn{Re}(\mu) t} \|z\|_{X_\varepsilon^\frac{1}{2}} \\
& \leq M e^{-(\bar{\lambda}+1)t} \|z\|_{X_\varepsilon^\frac{1}{2}},
\end{align*}
which proves (i). The same argument proves (ii) with the aid of Lemma \ref{Rate_of_convergence}. For (iii), due our choices, $\lambda_\varepsilon^2$ does not lying in the region delimited by $\Gamma$ and then   
$$
e^{-A_\varepsilon^- t}z=e^{-A_\varepsilon t}(I-Q_\varepsilon(\lambda))z=\sum_{j=2}^\infty e^{-\lambda_\varepsilon^j t}\pin{z,\varphi_\varepsilon^j}_{L^2}\varphi_\varepsilon^j,\quad t>0,
$$
where $\varphi_\varepsilon^j$ are the orthonormal eigenfunctions associated to $\lambda_\varepsilon^j$, $j\in\{2,3,...\}$.  Then,
$$
\|e^{-A^-_\varepsilon  t}z\|_{X_\varepsilon^\frac{1}{2}}\leq e^{-\lambda_\varepsilon^2 t}\Big(\sum_{j=2}^\infty \pin{z,\varphi_\varepsilon^j}^2_{L^2}\lambda_\varepsilon^j\Big)^\frac{1}{2}\leq M e^{-\lambda_\varepsilon^2 t}\|z\|_{X_\varepsilon^\frac{1}{2}},\quad t>0.
$$
\end{proof}

The rate of convergence of equilibrium points can be obtained as follows.
\begin{theo}
Let $u_*^0\in\mathcal{E}_0$. Then for $\varepsilon$ sufficiently small (we still denote $\varepsilon\in (0,\varepsilon_0]$), there is $\delta>0$ such that the equation $A_\varepsilon u-f(u)=0$ has the only solution $u_*^\varepsilon\in \{u\in X_\varepsilon^\frac{1}{2}\,;\,\|u-u_*^0\|_{X_\varepsilon^\frac{1}{2}}\leq \delta\}$. Moreover
\begin{equation}\label{rate_of_equilibrium}
\|u_*^\varepsilon-u_*^0\|_{X_\varepsilon^\frac{1}{2}}\leq C(\tau(\varepsilon)+p(\varepsilon)^{-\frac{1}{2}}).
\end{equation} 
\end{theo}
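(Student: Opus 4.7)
The plan is to linearise the equilibrium equation around $u_*^0$ and apply Banach's fixed point theorem. Setting $u=u_*^0+v$, the equation $A_\varepsilon u-f(u)=0$ becomes
$$
(A_\varepsilon-f'(u_*^0))v \;=\; -h_\varepsilon + r_\varepsilon(v),
$$
where $h_\varepsilon:=A_\varepsilon u_*^0-f(u_*^0)$ is the residual of $u_*^0$ in the perturbed problem and $r_\varepsilon(v):=f(u_*^0+v)-f(u_*^0)-f'(u_*^0)v$ is the quadratic remainder. Since $u_*^0\in X_0^{\frac{1}{2}}$ is a constant (so $\nabla u_*^0=0$) and $(\lambda+V_0)u_*^0=f(u_*^0)$, a direct computation gives $h_\varepsilon=(V_\varepsilon-V_0)u_*^0$, whence $\|h_\varepsilon\|_{L^2}\le C\tau(\varepsilon)$ by \eqref{map_tau}. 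The growth hypothesis (i), coupled with the uniform embedding $X_\varepsilon^{\frac{1}{2}}\subset H^1\subset L^q$ furnished by \eqref{non_equivalence_norm}, gives $\|r_\varepsilon(v)\|_{L^2}\le C\|v\|_{X_\varepsilon^{\frac{1}{2}}}^2$ and a Lipschitz constant of order $\delta$ for $v$ in a ball $\bar B_\delta(0)\subset X_\varepsilon^{\frac{1}{2}}$.

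The analytic heart of the argument is the uniform invertibility of $A_\varepsilon-f'(u_*^0)$ as an operator from $L^2(\Omega)$ into $X_\varepsilon^{\frac{1}{2}}$. I would use the splitting of Proposition \ref{spectral_properties}, writing $X_\varepsilon^{\frac{1}{2}}=Y_\varepsilon\oplus Z_\varepsilon$ with $Y_\varepsilon$ one-dimensional; both summands are invariant under $A_\varepsilon$ and under multiplication by the scalar $f'(u_*^0)$. On $Y_\varepsilon$ the sole eigenvalue of $A_\varepsilon^+$ lies near $\bar\lambda$, and hyperbolicity of $u_*^0$ forces $\bar\lambda-f'(u_*^0)\ne 0$, so $A_\varepsilon^+-f'(u_*^0)$ is invertible with inverse bounded independently of $\varepsilon$. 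On $Z_\varepsilon$, Proposition \ref{spectral_properties}(iii) gives $\sigma(A_\varepsilon^-)\subset [Cp(\varepsilon),\infty)$, and the energy identity
$$
\|(A_\varepsilon^--f'(u_*^0))^{-1}g\|_{X_\varepsilon^{\frac{1}{2}}}^2=\pin{(A_\varepsilon^--f'(u_*^0))^{-1}g,\,g}_{L^2}
$$
combined with this spectral lower bound yields $\|(A_\varepsilon^--f'(u_*^0))^{-1}\|_{\LL(L^2,X_\varepsilon^{\frac{1}{2}})}\le Cp(\varepsilon)^{-1/2}$. Together, the two pieces produce a uniform $\LL(L^2,X_\varepsilon^{\frac{1}{2}})$-bound.

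With the linear inversion in hand, the map $\Phi(v):=(A_\varepsilon-f'(u_*^0))^{-1}(-h_\varepsilon+r_\varepsilon(v))$ is a contraction on $\bar B_\delta(0)\subset X_\varepsilon^{\frac{1}{2}}$ for $\delta$ sufficiently small and $\varepsilon$ sufficiently small: the linear bound yields $\|\Phi(0)\|_{X_\varepsilon^{\frac{1}{2}}}\le C\tau(\varepsilon)$, while the quadratic nature of $r_\varepsilon$ gives $\mathrm{Lip}(\Phi|_{B_\delta})\le C\delta<1/2$. The unique fixed point $v_*^\varepsilon$ produces $u_*^\varepsilon:=u_*^0+v_*^\varepsilon$, the only zero of $A_\varepsilon u-f(u)$ in the $\delta$-ball around $u_*^0$, and the fixed-point estimate gives $\|u_*^\varepsilon-u_*^0\|_{X_\varepsilon^{\frac{1}{2}}}\le 2C\|h_\varepsilon\|_{L^2}\le C\tau(\varepsilon)\le C(\tau(\varepsilon)+p(\varepsilon)^{-1/2})$, which is the claimed rate.

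The principal obstacle is keeping every constant independent of $\varepsilon$ throughout the inversion of $A_\varepsilon-f'(u_*^0)$: the non-uniform norm equivalence of Corollary \ref{non_equivalence_norm1} precludes any naive reduction to $H^1$, and ensuring that the perturbation by $f'(u_*^0)$ does not damage the spectral gap on $Z_\varepsilon$ requires the quantitative lower bound $\lambda_\varepsilon^2\ge Cp(\varepsilon)\to\infty$ from Proposition \ref{spectral_properties}(iii). The factor $p(\varepsilon)^{-1/2}$ in the stated bound enters precisely through the $Z_\varepsilon$-estimate on $(A_\varepsilon^-)^{-1}$, and is harmlessly absorbed into the single rate $\tau(\varepsilon)+p(\varepsilon)^{-1/2}$ shared with the rest of Theorem \ref{main_result}.
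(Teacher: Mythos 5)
Your argument is structurally sound and takes a genuinely different route from the paper, though both rest on a contraction for the equilibrium equation. The paper writes $u_*^\varepsilon$ and $u_*^0$ as fixed points of $u\mapsto(A_\varepsilon-f'(u_*^0))^{-1}[f(u)-f'(u_*^0)u]$ (using the notation $V_0:=-f'(u_*^0)$, which clashes with the potential limit), then transports the rate via the resolvent identity $(A_\varepsilon+V_0)^{-1}-(A_0+V_0)^{-1}P=[I-(A_\varepsilon+V_0)^{-1}V_0](A_\varepsilon^{-1}-A_0^{-1}P)[I-V_0(A_0+V_0)^{-1}]$ and Corollary~\ref{rate_of_resolvent}. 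You instead subtract the two equilibrium equations, obtain the explicit residual $h_\varepsilon=(V_\varepsilon-V_0)u_*^0$, and invert $A_\varepsilon-f'(u_*^0)$ directly through the $Y_\varepsilon\oplus Z_\varepsilon$ splitting of Proposition~\ref{spectral_properties}. This is cleaner and in fact sharper: since $u_*^0$ is constant the residual carries no gradient contribution, so your computation yields $C\tau(\varepsilon)$ alone (the $Z_\varepsilon$-block even gains an extra $p(\varepsilon)^{-1/2}$), which of course implies the stated weaker bound. The paper forfeits this because it feeds the general $L^2$ bound of Lemma~\ref{Rate_of_convergence}, where the $p(\varepsilon)^{-1/2}$ is genuinely present.

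Two points need repair before the argument closes. (a) For $n=1$ the standing hypothesis only requires $p\ge 1$, so $V_\varepsilon-V_0$ need not lie in $L^2$ and the estimate $\|h_\varepsilon\|_{L^2}\le C\tau(\varepsilon)$ is not available; the fix (used in Lemma~\ref{Rate_of_convergence}) is to measure $h_\varepsilon$ in $(X_\varepsilon^{1/2})^*$ via the $\varepsilon$-uniform embedding $X_\varepsilon^{1/2}\subset L^\infty$ and to carry out the inversion variationally. (b) The displayed energy identity is not correct as stated: with $w=(A_\varepsilon^--f'(u_*^0))^{-1}g$ one has $\|w\|_{X_\varepsilon^{1/2}}^2=\pin{g,w}_{L^2}+f'(u_*^0)\|w\|_{L^2}^2$, and the second term must be absorbed using $\|w\|_{L^2}^2\le (Cp(\varepsilon))^{-1}\|w\|_{X_\varepsilon^{1/2}}^2$ (from $\lambda_2^\varepsilon\ge Cp(\varepsilon)\to\infty$) before the bound $\|w\|_{X_\varepsilon^{1/2}}\le Cp(\varepsilon)^{-1/2}\|g\|_{L^2}$ follows. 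Finally, with $f$ only $C^1$ the remainder satisfies $\|r_\varepsilon(v)\|=o(\|v\|_{X_\varepsilon^{1/2}})$ rather than $O(\|v\|^2)$; this is still sufficient for the contraction but should be stated accordingly.
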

\begin{proof}
The proof is the same as given in \cite{Arrieta} and \cite{Carbone2008}. Here we just need to proof the estimates \eqref{rate_of_equilibrium}. We have $u_*^\varepsilon$ and $u_*^0$ given by 
$$
u_*^0=(A_0+V_0)^{-1}[f(u_*^0)+V_0u_*^0]\quad\tn{and}\quad u_*^\varepsilon=(A_\varepsilon+V_0)^{-1}[f(u_*^\varepsilon)+V_0 u_*^\varepsilon],
$$ 
where $V_0=-f'(u_*^0)$. Thus
\begin{align*}
\|u_*^\varepsilon-u_*^0\|_{X_\varepsilon^\frac{1}{2}}&\leq \|(A_\varepsilon+V_0)^{-1}[f(u_*^\varepsilon)+V_0 u_*^\varepsilon]-(A_0+V_0)^{-1}[f(u_*^0)+V_0u_*^0]\|_{X_\varepsilon^\frac{1}{2}}\\
&\leq \|[(A_\varepsilon+V_0)^{-1}-(A_0+V_0)^{-1}P][f(u_*^\varepsilon)+V_0 u_*^\varepsilon]\|_{X_\varepsilon^\frac{1}{2}}\\
&+\|(A_0+V_0)^{-1}P[f(u_*^\varepsilon)-f(u_*^0)+V_0(u_*^\varepsilon-u_*^0)]\|_{X_\varepsilon^\frac{1}{2}}.
\end{align*}
We have the following equality 
$$
(A_\varepsilon+V_0)^{-1}-(A_0+V_0)^{-1}P=[I-(A_\varepsilon+V_0)^{-1}V_0](A_\varepsilon^{-1}-A_0^{-1}P)[I-V_0(A_0+V_0)^{-1}].
$$
And then
$
\|[(A_\varepsilon+V_0)^{-1}-(A_0+V_0)^{-1}P][f(u_*^\varepsilon)+V_0 u_*^\varepsilon]\|_{X_\varepsilon^\frac{1}{2}}\leq C(\tau(\varepsilon)+p(\varepsilon)^{-\frac{1}{2}}).
$

If we denote $z^\varepsilon=f(u_*^\varepsilon)-f(u_*^0)+V_0(u_*^\varepsilon-u_*^0)$, since $f$ is continuously differentiable, for all $\delta>0$ there is $\varepsilon$ sufficiently small such that 
$
\|z^\varepsilon\|_{X_\varepsilon^\frac{1}{2}}\leq \delta\|u_*^\varepsilon-u_*^0\|_{X_\varepsilon^\frac{1}{2}},
$
thus
$$
\|(A_0+V_0)^{-1}Pz^\varepsilon\|_{X_\varepsilon^\frac{1}{2}}\leq \delta \|(A_0+V_0)^{-1}P\|_{\LL(L^2,X_\varepsilon^\frac{1}{2})}\|u_*^\varepsilon-u_*^0\|_{X_\varepsilon^\frac{1}{2}}. 
$$
We choice $\delta$ sufficiently small such that $\delta\|(A_0+V_0)^{-1}P\|_{\LL(L^2,X_\varepsilon^\frac{1}{2})}\leq\frac{1}{2}$, and then
$$
\|u_*^\varepsilon-u_*^0\|_{X_\varepsilon^\frac{1}{2}}\leq C(\tau(\varepsilon)+p(\varepsilon)^{-\frac{1}{2}})+\frac{1}{2}\|u_*^\varepsilon-u_*^0\|_{X_\varepsilon^\frac{1}{2}}.
$$
\end{proof}

\section{Invariant Manifold}\label{Invariant_Manifold}
In this section we construct exponentially attracting invariant manifolds for \eqref{semilinear_problem} as the graph of Lipschitz continuous map defined an a suitable one dimensional space. We follow the original idea of \cite{Henry1980} carried out by \cite{Carvalho2010a}. 

\begin{theo}
Choose $\delta>0$ and $R>2+\bar{\lambda}$ according with Proposition \ref{spectral_properties} and for $\varepsilon\in [0,\varepsilon_0]$, write $X_\varepsilon^\frac{1}{2}=Y_\varepsilon\oplus Z_\varepsilon$ and  $A_\varepsilon=A_\varepsilon^+\oplus A_\varepsilon^-$ according with Proposition \ref{Proposition_linear_estimates}. Then for $\varepsilon$ sufficiently small (we still denote $\varepsilon\in (0,\varepsilon_0]$), there is an invariant manifold $\mathcal{M}_\varepsilon$ for \eqref{semilinear_problem}, which is given by graph of a certain Lipschitz continuous map $s_\ast^\varepsilon:Y_\varepsilon\to Z_\varepsilon$ as 
$$
\mathcal{M}_\varepsilon=\{u^\varepsilon\in X_\varepsilon\,;\, u^\varepsilon = Q_\varepsilon(\bar{\lambda})u^\varepsilon+s_{*}^\varepsilon(Q_\varepsilon(\bar{\lambda})u^\varepsilon)\}
.$$ 
The map $s_\ast^\varepsilon:Y_\varepsilon\to Z_\varepsilon$ satisfies the condition
\begin{equation}\label{estimate_invariant_manifold}
|\!|\!|s_\ast^\varepsilon |\!|\!|=\sup_{v^\varepsilon\in Y_\varepsilon}\|s_\ast^\varepsilon(v^\varepsilon)\|_{X_\varepsilon^\frac{1}{2}}\leq C(\tau(\varepsilon)+p(\varepsilon)^{-\frac{1}{2}}),
\end{equation}
for some constant $C$ independent of $\varepsilon$.
The invariant manifold $\mathcal{M}_\varepsilon$ is exponentially attracting and the global attractor $\mathcal{A}_\varepsilon$ of the problem \eqref{semilinear_problem} lying in $\mathcal{M}_\varepsilon$ and the flow on $\mathcal{A}_\varepsilon$ is given by
$$
u^\varepsilon(t)=v^\varepsilon(t)+s_\ast^\varepsilon(v^\varepsilon(t)), \quad t\in\R,
$$ 
where $v^\varepsilon(t)$ satisfy
$$
\dot{v^\varepsilon}+A_\varepsilon^+v^\varepsilon=Q_\varepsilon(\bar{\lambda})f(v^\varepsilon+s_\ast^\varepsilon(v^\varepsilon(t))).
$$
\end{theo}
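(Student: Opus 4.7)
The plan is to apply the classical Lyapunov--Perron construction as in \cite{Henry1980}, in the form used in \cite{Carvalho2010a}, taking advantage of the uniform hyperbolic splitting provided by Proposition \ref{Proposition_linear_estimates}: the unstable rate $\gamma=\bar{\lambda}+1$ is fixed while the stable rate $\beta(\varepsilon)\geq Cp(\varepsilon)$ tends to infinity as $\varepsilon\to 0$, so the spectral gap is uniform for $\varepsilon$ small. As a preliminary step I would replace $f$ by a globally defined $\tilde f$ obtained by smoothly cutting off $f$ outside a ball of $X_\varepsilon^{\frac{1}{2}}$ containing every $\mathcal{A}_\varepsilon$. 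Then $\tilde f$ is bounded in $L^\infty$ by a constant independent of $\varepsilon$, globally Lipschitz with constant $\rho$ that can be made as small as we wish by shrinking the modification region, and coincides with $f$ on a neighborhood of each attractor, so any invariant manifold for the modified system containing the bounded complete orbits will automatically contain $\mathcal{A}_\varepsilon$.

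Next, on the complete metric space $\Sigma_\varepsilon$ of Lipschitz maps $s:Y_\varepsilon\to Z_\varepsilon$ with $|\!|\!|s|\!|\!|\leq D$ and Lipschitz constant $\leq\Delta$, I would define the Lyapunov--Perron operator
\begin{equation*}
\Psi_\varepsilon[s](v)=\int_{-\infty}^{0} e^{A_\varepsilon^-\sigma}(I-Q_\varepsilon(\bar{\lambda}))\,\tilde f\bigl(V^{v,s}_\varepsilon(\sigma)+s(V^{v,s}_\varepsilon(\sigma))\bigr)\,d\sigma,
\end{equation*}
where $V^{v,s}_\varepsilon:(-\infty,0]\to Y_\varepsilon$ is the unique backward-bounded solution of $\dot V+A_\varepsilon^+ V=Q_\varepsilon(\bar{\lambda})\tilde f(V+s(V))$ with $V(0)=v$, obtained by a contraction in a weighted exponential space using estimate (i) of Proposition \ref{Proposition_linear_estimates} and smallness of $\rho$. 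Combining estimates (i), (iii) of that proposition with the usual $L^2\to X_\varepsilon^{\frac{1}{2}}$ smoothing of the analytic semigroup $e^{-A_\varepsilon^- t}$, one verifies that $\Psi_\varepsilon$ maps $\Sigma_\varepsilon$ into itself and is a contraction. Its fixed point $s_*^\varepsilon$ defines the manifold $\mathcal{M}_\varepsilon$, which is exponentially attracting at rate $\beta(\varepsilon)$ by construction.

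The principal new content, and the main obstacle, is the quantitative bound \eqref{estimate_invariant_manifold}. Since $X_0^{\frac{1}{2}}$ is one-dimensional we have $Z_0=\{0\}$ and hence $s_*^0\equiv 0$, so it suffices to bound $\|s_*^\varepsilon(v)\|_{X_\varepsilon^{\frac{1}{2}}}$ directly from the fixed-point identity. Decomposing $I-Q_\varepsilon(\bar{\lambda})=(P-Q_\varepsilon(\bar{\lambda}))+(I-P)$, the first piece is controlled by \eqref{projection_convergence} together with the uniform $L^\infty$ bound on $\tilde f$, producing a factor $C(\tau(\varepsilon)+p(\varepsilon)^{-\frac{1}{2}})$. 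For the second piece, the key observation is that $\tilde f(Pw)$ is spatially constant, so $(I-P)\tilde f(w)=(I-P)[\tilde f(w)-\tilde f(Pw)]$; combining the Lipschitz property of $\tilde f$ with the Poincar\'e inequality for the spatial average and the coercivity $\sqrt{p(\varepsilon)}\,\|\nabla w\|_{L^2}\leq\|w\|_{X_\varepsilon^{\frac{1}{2}}}$ encoded in the definition of the norm, I obtain $\|(I-P)\tilde f(w)\|_{L^2}\leq Cp(\varepsilon)^{-\frac{1}{2}}$. Integrating both contributions against $\|e^{-A_\varepsilon^- t}\|_{\LL(L^2,X_\varepsilon^{\frac{1}{2}})}\leq Mt^{-\frac{1}{2}}e^{-\beta(\varepsilon)t}$ and summing yields \eqref{estimate_invariant_manifold}.

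Finally, the inclusion $\mathcal{A}_\varepsilon\subset\mathcal{M}_\varepsilon$ and the reduction to the scalar equation on $Y_\varepsilon$ follow from standard arguments: exponential attraction at rate $\beta(\varepsilon)$ forces any bounded complete orbit $u^\varepsilon(t)=v^\varepsilon(t)+z^\varepsilon(t)$ of \eqref{semilinear_problem} to satisfy the backward variation-of-constants formula for its $Z_\varepsilon$-component, whence $z^\varepsilon(0)=s_*^\varepsilon(v^\varepsilon(0))$; since $\mathcal{A}_\varepsilon$ consists of bounded complete orbits, this gives $\mathcal{A}_\varepsilon\subset\mathcal{M}_\varepsilon$. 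Projecting \eqref{semilinear_problem} onto $Y_\varepsilon$ by $Q_\varepsilon(\bar{\lambda})$ and substituting $u^\varepsilon=v^\varepsilon+s_*^\varepsilon(v^\varepsilon)$ produces the desired one-dimensional reduced equation for $v^\varepsilon$.
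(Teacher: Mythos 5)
Your overall architecture — Lyapunov--Perron map on a complete metric space of graphs, cut-off of $f$ so the Nemitskii operator is bounded and globally Lipschitz with small constant $\rho$, identification of $\mathcal{A}_\varepsilon$ with bounded complete orbits, projection onto $Y_\varepsilon$ for the reduced ODE — is the same as the paper's and is sound. The technical point where you diverge is the estimate \eqref{estimate_invariant_manifold}, and there you have a gap.

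The paper compares the Lyapunov--Perron integral for $s_*^\varepsilon$ with the corresponding one at $\varepsilon=0$: it writes $\|s_*^\varepsilon(\eta)\|\leq I_1+I_2$ with $I_2=\int\|e^{-A_\varepsilon^-(\tau-r)}(I-Q_\varepsilon(\bar\lambda))f(v^0)\|\,dr$, and the crucial observation is that $v^0(t)\in X_0^{\frac{1}{2}}$ is \emph{spatially constant}, so $(I-P)f(v^0)=0$ identically; hence $(I-Q_\varepsilon(\bar\lambda))f(v^0)=(P-Q_\varepsilon(\bar\lambda))f(v^0)$, which is $O(\tau(\varepsilon)+p(\varepsilon)^{-1/2})$ by \eqref{projection_convergence} with a constant that does not see the size of $v^0$. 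The remainder $I_1$ is then absorbed by the smallness of $\rho$. You instead split $(I-Q_\varepsilon(\bar\lambda))=(P-Q_\varepsilon(\bar\lambda))+(I-P)$ applied directly to $\tilde f(w)$ at the perturbed trajectory $w=V+s(V)$. The first piece is fine. For the second you write $(I-P)\tilde f(w)=(I-P)[\tilde f(w)-\tilde f(Pw)]$ and bound it by Lipschitz $+$ Poincar\'e $+$ coercivity, claiming $\|(I-P)\tilde f(w)\|_{L^2}\leq Cp(\varepsilon)^{-1/2}$ with $C$ independent of $w$. But this chain actually gives
\begin{equation*}
\|(I-P)\tilde f(w)\|_{L^2}\leq \rho\,\|(I-P)w\|_{L^2}\leq \rho\,C_{P}\|\nabla w\|_{L^2}\leq \rho\,C_{P}\,p(\varepsilon)^{-1/2}\,\|w\|_{X_\varepsilon^{\frac{1}{2}}},
\end{equation*}
and $\|w\|_{X_\varepsilon^{\frac{1}{2}}}=\|V+s(V)\|_{X_\varepsilon^{\frac{1}{2}}}$ is \emph{not} uniformly bounded: $V$ ranges over the whole one-dimensional (unbounded) space $Y_\varepsilon$, and the sup in the definition of $|\!|\!|s_*^\varepsilon|\!|\!|$ is taken over all $\eta\in Y_\varepsilon$. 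The cut-off does not rescue this as written: for large $\|V\|$ it replaces the Lipschitz bound by the trivial bound $2\|\tilde f\|_{L^\infty}$, which is $O(1)$, not $O(p(\varepsilon)^{-1/2})$. To close the argument along your route you would have to use additionally that $V$ lies in the one-dimensional space $Y_\varepsilon$ spanned by the first eigenfunction, which is itself almost constant (so $\|(I-P)V\|$ is small relative to $\|V\|$), together with the support properties of the cut-off — none of which you invoke. By contrast, the paper's choice to compare against the spatially constant limit orbit $v^0$ makes the problematic $(I-P)$ contribution vanish identically and sidesteps this dependence on $\|w\|$. You should either adopt that comparison or supply the missing uniform control on $\|(I-P)w\|_{L^2}$ over $Y_\varepsilon$.

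A smaller technical difference: you apply the $L^2\to X_\varepsilon^{\frac{1}{2}}$ smoothing estimate $\|e^{-A_\varepsilon^- t}\|_{\LL(L^2,X_\varepsilon^{\frac{1}{2}})}\lesssim t^{-1/2}e^{-\beta(\varepsilon)t}$, whereas the paper assumes $G_\varepsilon$ is bounded directly in $X_\varepsilon^{\frac{1}{2}}$ and uses the eigenfunction expansion to get $\|e^{-A_\varepsilon^- t}z\|_{X_\varepsilon^{\frac{1}{2}}}\leq M e^{-\beta(\varepsilon)t}\|z\|_{X_\varepsilon^{\frac{1}{2}}}$ without the singularity. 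Both are acceptable, but if you use the smoothing route you must then check that the constant $M$ in the smoothing estimate is uniform in $\varepsilon$, which is not entirely automatic given the degenerating norm \eqref{non_equivalence_norm}.
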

\begin{proof}
The demonstration is standard so we omit some calculations. We focus on estimate \eqref{estimate_invariant_manifold}.

Given $L,\Delta>0$ we consider the set  
$$
\Sigma_\varepsilon = \Big\{s^\varepsilon: Y_\varepsilon\to Z_\varepsilon\,;\, |\!|\!| s^\varepsilon|\!|\!|\leq D  \tn{ and }\|s^\varepsilon(v)-s^\varepsilon(\tilde{v})\|_{X_\varepsilon^{\frac{1}{2}}}\leq \Delta \|v-\tilde{v}\|_{X_\varepsilon^{\frac{1}{2}}}\Big\}.
$$
$(\Sigma_\varepsilon, |\!|\!| \cdot |\!|\!|)$ is a complete metric space. We write the solution $u^\varepsilon$ of \eqref{semilinear_problem} as $u^\varepsilon=v^\varepsilon+z^\varepsilon$, with $v^\varepsilon\in Y_\varepsilon$ and $z^\varepsilon\in Z_\varepsilon$ and since $Q_\varepsilon(\bar{\lambda})$ and $I-Q_\varepsilon(\bar{\lambda})$ commute with $A_\varepsilon$,  we obtain the equations 
\begin{equation}\label{couple_system}
\begin{cases}
v_t^\varepsilon+A_\varepsilon^+ v^\varepsilon = Q_\varepsilon(\bar{\lambda})f(v^\varepsilon+z^\varepsilon):=H_\varepsilon(v^\varepsilon,z^\varepsilon)\\
z_t^\varepsilon+A_\varepsilon^- z^\varepsilon = (I-Q_\varepsilon(\bar{\lambda}))f(v^\varepsilon+z^\varepsilon):=G_\varepsilon(v^\varepsilon,z^\varepsilon).
\end{cases}
\end{equation}
By assumption there is a certain $\rho>0$ such that for all $ v^\varepsilon,\tilde{v}^\varepsilon\in Y_\varepsilon$ and $z^\varepsilon,\tilde{z}^\varepsilon\in Z_\varepsilon$,
\begin{itemize} 
\item[]$\|H_\varepsilon( v^\varepsilon,z^\varepsilon)\|_{X_\varepsilon^\frac{1}{2}}\leq \rho,$ $\|G_\varepsilon( v^\varepsilon,z^\varepsilon)\|_{X_\varepsilon^\frac{1}{2}}\leq \rho,$
\item[]$\|H_\varepsilon( v^\varepsilon,z^\varepsilon)-H_\varepsilon( \tilde{v}^\varepsilon,\tilde{z}^\varepsilon)\|_{X_\varepsilon^\frac{1}{2}}\leq \rho (\|v^\varepsilon-\tilde{v}_\varepsilon\|_{X_\varepsilon^\frac{1}{2}} +\|z^\varepsilon-\tilde{z}_\varepsilon \|_{X_\varepsilon^\frac{1}{2}} ),$
\item[]$
\|G_\varepsilon( v^\varepsilon,z^\varepsilon)-G_\varepsilon( \tilde{v}^\varepsilon,\tilde{z}^\varepsilon)\|_{X_\varepsilon^\frac{1}{2}}\leq \rho (\|v^\varepsilon-\tilde{v}_\varepsilon\|_{X_\varepsilon^\frac{1}{2}} +\|z^\varepsilon-\tilde{z}_\varepsilon \|_{X_\varepsilon^\frac{1}{2}} ).$
\end{itemize}
Also, for $\varepsilon$ sufficiently small, we can choose $\rho$ such that
 \begin{itemize}
\item[] $\rho M\beta^{-1}\leq D$, $0 \leq \beta-\gamma-\rho M(1+\Delta)$, $\frac{\rho M^2(1+\Delta)}{\beta-\gamma-\rho M(1+\Delta)}\leq \Delta$,
\item[] $\rho M \beta^{-1}+\frac{\rho^2M^2(1+\Delta)\gamma^{-1}}{\beta-\gamma-\rho M(1+\Delta)} \leq \frac{1}{2}$, $L=\Big[\rho M+\frac{\rho^2 M^2(1+\Delta)(1+M)}{\beta-\gamma-\rho M(1+\Delta)} \Big]$, $\beta-L> 0$.
\end{itemize}

Let $s^\varepsilon\in \Sigma_\varepsilon$ and $v^\varepsilon(t)=v^\varepsilon(t,\tau,\eta,s^\varepsilon)$ be the solution of 
$$
\begin{cases}
v_t^\varepsilon+A_\varepsilon^+ v^\varepsilon=H_\varepsilon(v^\varepsilon,s^\varepsilon(v^\varepsilon)),\quad  t<\tau \\
v^\varepsilon(\tau)=\eta.
 \end{cases}
$$
We define $\Phi_\varepsilon: \Sigma_\varepsilon\to\Sigma_\varepsilon$ given by
$$
\Phi_\varepsilon(s^\varepsilon)(\eta)=\int_{-\infty}^{\tau} e^{-A_\varepsilon^- (\tau - r)}G_\varepsilon(v^\varepsilon(r),s^\varepsilon(v^\varepsilon(r)) )\,dr.
$$
Then by Proposition \ref{Proposition_linear_estimates}, $\|\Phi_\varepsilon(s^\varepsilon)(\eta)\|_{X_\varepsilon^{\frac{1}{2}}}\leq D$.
 
For $s^\varepsilon, \tilde{s^\varepsilon}\in \Sigma_\varepsilon$, $\eta,\tilde{\eta}\in Y_\varepsilon$,  $v^\varepsilon(t)=v^\varepsilon(t,\tau,\eta,s^\varepsilon)$ and $\tilde{v}^\varepsilon(t)=\tilde{v}^\varepsilon(t,\tau,\tilde{\eta},\tilde{s}^\varepsilon)$ we have
\begin{multline*}
v^\varepsilon(t)-\tilde{v}^\varepsilon(t) = e^{-A_\varepsilon^+ (t-\tau)}(\eta-\tilde{\eta}) \\ +\int_{\tau}^t   e^{-A_\varepsilon^+ (t - r)}[H_\varepsilon(v^\varepsilon(r),s^\varepsilon(v^\varepsilon(r)) )-H_\varepsilon(\tilde{v}^\varepsilon(r),\tilde{s}^\varepsilon(\tilde{v}^\varepsilon(r)) )] \,dr,
\end{multline*}
and we can prove that
$$
\|v^\varepsilon(t)-\tilde{v}^\varepsilon(t)\|_{X_\varepsilon^{\frac{1}{2}}}\leq \Big[M\|\eta-\tilde{\eta}\|_{X_\varepsilon^\frac{1}{2}}+\rho M \gamma^{-1}|\!|\!| s^\varepsilon-\tilde{s}^\varepsilon |\!|\!|  \Big]  e^{[\rho M(1+\Delta)+\gamma](\tau-t)}.
$$
From this we obtain 
\begin{multline*}
\|\Phi_\varepsilon(s^\varepsilon)(\eta)-\Phi_\varepsilon(\tilde{s}^\varepsilon)(\tilde{\eta})\|_{X_\varepsilon^{\frac{1}{2}}}\\ \leq \Big[\frac{\rho M^2(1+\Delta)}{\beta-\gamma-\rho M(1+\Delta)}\Big] \|\eta-\tilde{\eta}\|_{X_\varepsilon^{\frac{1}{2}}}+\Big[\rho M\beta^{-1}+\frac{\rho^2M^2(1+\Delta)\gamma^{-1}}{\beta-\gamma-\rho M (1+\Delta)}\Big]|\!|\!|s^\varepsilon-\tilde{s}^\varepsilon |\!|\!|.
\end{multline*}
Therefore $\Phi_\varepsilon$ is a contraction on $\Sigma_\varepsilon$ hence there is a unique  $s_\ast^\varepsilon \in \Sigma_\varepsilon$. 

Now, let $(\bar{v}^\varepsilon,\bar{z}^\varepsilon)\in \mathcal{M}_\varepsilon$, $\bar{z}^\varepsilon=s_\ast^\varepsilon(\bar{v}^\varepsilon)$ and let $v_{s_\ast}^\varepsilon(t)$ be the solution of 
$$
\begin{cases}
v_t^\varepsilon+A_\varepsilon^+ v^\varepsilon=H_\varepsilon(v^\varepsilon,s_*^\varepsilon(v^\varepsilon)),\quad  t<\tau \\
v^\varepsilon(0)=\bar{v}^\varepsilon.
 \end{cases}
$$ 
Thus, $\{(v_{s_*}^\varepsilon(t), s_*^\varepsilon(v_{s_*}^\varepsilon(t))\}_{t\in\R}$ defines a curve on $\mathcal{M}_\varepsilon$. But the only solution of equation 
$$
z_t^\varepsilon+A_\varepsilon^- z^\varepsilon=G_\varepsilon(v_{s_*}^\varepsilon(t),s_*^\varepsilon(v_{s_*}^\varepsilon(t)))
$$ 
which stay bounded when $t\to-\infty$ is given by
$$
z_{s_*}^\varepsilon=\int_{-\infty}^t e^{-A_\varepsilon^- (t-r)}G_\varepsilon(v_{s_*}^\varepsilon(t),s_*^\varepsilon(v_{s_*}^\varepsilon(t)))\,dr = s_*^\varepsilon(v_{s_*}^\varepsilon(t)).
$$
Therefore $(v_{s_*}^\varepsilon(t), s_*^\varepsilon(v_{s_*}^\varepsilon(t))$ is a solution of \eqref{couple_system} through $(\bar{v}^\varepsilon,\bar{z}^\varepsilon)$ and thus $\mathcal{M}_\varepsilon$ is a invariant manifold for \eqref{semilinear_problem} according to the Definition \ref{invariant_manifold}.  

Now we will prove the estimate \eqref{estimate_invariant_manifold}. Note that since $Q_0(\lambda)=I_{X_0^\frac{1}{2}}$ we have $(I-Q_0(\lambda))=0$, $\mathcal{M}_0=X_0^\frac{1}{2}$, $s_\ast^0=0$ and $G_0(v^0,s_*^0(v^0))=0$, where $v^0$ is solution of \eqref{couple_system} with $\varepsilon=0$, that is $v^0=u^0$ is the solution of \eqref{semilinear_problem} with $\varepsilon=0$. Thus 
\begin{align*}
\|s^\varepsilon_*(\eta)\|_{X_\varepsilon^\frac{1}{2}}& \leq \int_{-\infty}^\tau \|e^{-A_\varepsilon^-(\tau-r)}G_\varepsilon(v^\varepsilon,s_*^\varepsilon(v^\varepsilon))\|_{X_\varepsilon^\frac{1}{2}}\,dr\\
& \leq \int_{-\infty}^\tau \|e^{-A_\varepsilon^-(\tau-r)}G_\varepsilon(v^\varepsilon,s_*^\varepsilon(v^\varepsilon))-e^{-A_\varepsilon^-(\tau-r)}G_\varepsilon(v^0,0)\|_{X_\varepsilon^\frac{1}{2}}\,dr\\
& + \int_{-\infty}^\tau \|e^{-A_\varepsilon^-(\tau-r)}G_\varepsilon(v^0,0)\|_{X_\varepsilon^\frac{1}{2}}\,dr.
\end{align*}
If we denote the last two integrals for $I_1$ and $I_2$ respectively, we have
\begin{align*}
I_1&\leq \int_{-\infty}^\tau M e^{-\beta (\tau - r)}\rho [(1+\Delta)\|v^\varepsilon-v^0\|_{X_\varepsilon^\frac{1}{2}}+|\!|\!|s_*^\varepsilon|\!|\!|]\,dr\\
&\leq \rho M(1+\Delta)\int_{-\infty}^\tau e^{-\beta (\tau - r)}\|v^\varepsilon-v^0\|_{X_\varepsilon^\frac{1}{2}}\,dr \\
& +\rho M |\!|\!|s_*^\varepsilon|\!|\!|\int_{-\infty}^\tau  e^{-\beta (\tau - r)}\,dr\\
&= \rho M(1+\Delta)\int_{-\infty}^\tau e^{-\beta (\tau-r)}\|v^\varepsilon-v^0\|_{X_\varepsilon^\frac{1}{2}}\,dr +\rho M\beta^{-1} |\!|\!|s_*^\varepsilon|\!|\!|.
\end{align*}
For $I_2$, we have $ G_\varepsilon(v^0,0)=(I-Q_\varepsilon(\bar{\lambda}))f(v^0)$, therefore
$
I_2\leq C (\tau(\varepsilon)+p(\varepsilon)^{-\frac{1}{2}}),
$
for some constant $C$ independent of $\varepsilon$. Thus
\begin{align*}
\|s^\varepsilon_*(\eta)\|_{X_\varepsilon^{\frac{1}{2}}} & \leq C(\tau(\varepsilon)+p(\varepsilon)^{-\frac{1}{2}})+\rho M\beta^{-1} |\!|\!|s_*^\varepsilon|\!|\!| + \rho M(1+\Delta)\int_{-\infty}^\tau e^{-\beta(\tau-r)}\|v^\varepsilon-v^0\|_{X_\varepsilon^{\frac{1}{2}} }\,dr.
\end{align*}
But, with the same argument above, it follows that 
$$
\|v^\varepsilon(t)-v^0(t)\|_{X_\varepsilon^{\frac{1}{2}}} \leq [C (\tau(\varepsilon)+p(\varepsilon)^{-\frac{1}{2}})+\rho M \gamma^{-1}|\!|\!|s_*^\varepsilon|\!|\!|]e^{[\rho M(1+\Delta)+\gamma](\tau-t)},
$$
thus
\begin{align*}
&\|s_*^\varepsilon(\eta)\|_{X_\varepsilon^\frac{1}{2}}\leq C(\tau(\varepsilon)+p(\varepsilon)^{-\frac{1}{2}})+ \Big[\rho M\beta^{-1}+\frac{\rho^2M^2(1+\Delta)\gamma^{-1}}{\beta-\gamma-\rho M (1+\Delta)} \Big]|\!|\!|s_*^\varepsilon|\!|\!|.
\end{align*}
Therefore
$
|\!|\!|s_*^\varepsilon|\!|\!|\leq C(\tau(\varepsilon)+p(\varepsilon)^{-\frac{1}{2}}).
$

It remains that $\mathcal{M}_\varepsilon$ is exponentially attracting and $\mathcal{A}_\varepsilon\subset\mathcal{M}_\varepsilon$. Let $(v^\varepsilon,z^\varepsilon)\in Y_\varepsilon\oplus Z_\varepsilon$ be the solution of \eqref{couple_system} and define $\xi^\varepsilon(t)=z^\varepsilon-s_*^\varepsilon(v^\varepsilon(t))$ and consider $y^\varepsilon(r,t), r\leq t$, $t\geq 0$, the solution of 
$$
\begin{cases}
y_t^\varepsilon+A_\varepsilon^+ y^\varepsilon=H_\varepsilon(y^\varepsilon,s_*^\varepsilon(y^\varepsilon)),\quad  r\leq t \\
y^\varepsilon(t,t)=v^\varepsilon(t).
\end{cases}
$$
Thus,
\begin{align*}
\|y^\varepsilon(r,t)-&v^\varepsilon(r)\|_{X_\varepsilon^{\frac{1}{2}}} \\
& =\Big\|\int_t^r e^{-A_\varepsilon^+(r-\theta)}[H_\varepsilon(y^\varepsilon(\theta,t),s_*^\varepsilon(y^\varepsilon(\theta,t)))-H_\varepsilon(v^\varepsilon(\theta),z^\varepsilon
(\theta))]\,d\theta\Big\|_{X_\varepsilon^{\frac{1}{2}}} \\
&\leq \rho M \int_r^t e^{-\gamma(r-\theta)}[(1+\Delta)\|y^\varepsilon(\theta,t)-v^\varepsilon(\theta)\|_{X_\varepsilon^{\frac{1}{2}}}+\|\xi^\varepsilon(\theta)\|_{X_\varepsilon^{\frac{1}{2}}}]\,d\theta
\end{align*}
By Gronwall inequality
$$
\|y^\varepsilon(r,t)-v^\varepsilon(r)\|_{X_\varepsilon^{\frac{1}{2}}}\leq\rho M \int_r^{t} e^{-(-\gamma-\rho M(1+\Delta))(\theta-r)} \|\xi^\varepsilon(\theta)\|_{X_\varepsilon^{\frac{1}{2}}}\,d\theta\quad r\leq t.
$$
Now we take $t_0\in[r,t]$ and then
\begin{align*}
\|y^\varepsilon(r,t) &-y^\varepsilon(r,t_0)\|_{X_\varepsilon^{\frac{1}{2}}}\\ 
& = \|e^{-A_\varepsilon^+ (r-t_0)}[y(t_0,t)-v^\varepsilon(t_0)]\|_{X_\varepsilon^{\frac{1}{2}}} \\
& +\Big\| \int_{t_0}^r e^{-A_\varepsilon^+(r-\theta)}[H_\varepsilon(y^\varepsilon(\theta,t),s_*^\varepsilon(y^\varepsilon(\theta,t)))-H_\varepsilon(y^\varepsilon(\theta,t_0),s_*^\varepsilon(y^\varepsilon(\theta,t_0)))]\, d\theta\Big\|_{X_\varepsilon^{\frac{1}{2}}} \\
&\leq \rho M^2 e^{-\gamma (r-t_0)} \int_{t_0}^{t} e^{-(-\gamma-\rho M(1+\Delta))(\theta-t_0)}\|\xi^\varepsilon(\theta)\|_{X_\varepsilon^{\frac{1}{2}}}\,d\theta\\
&+\rho M\int_{r}^{t_0} e^{-\gamma(r-\theta)}(1+\Delta)\| y^\varepsilon(\theta,t)-y^\varepsilon(\theta,t_0)\|_{X_\varepsilon^{\frac{1}{2}}}\,d\theta. 
\end{align*}
By Gronwall inequality
$$
\|y^\varepsilon(r,t)-y^\varepsilon(r,t_0)\|_{X_\varepsilon^{\frac{1}{2}}} \leq \rho M^2 \int_{t_0}^{t} e^{-(-\gamma-\rho M(1+\Delta))(\theta-r)} \|\xi^\varepsilon(\theta)\|_{X_\varepsilon^\frac{1}{2}}\,d\theta.
$$
Since
$$
z^\varepsilon(t)=e^{-A_\varepsilon^- (t-t_0)}z^\varepsilon(t_0)+\int_{t_0}^t e^{-A_\varepsilon^- (t-r)}G_\varepsilon(v^\varepsilon(r),z^\varepsilon(r))\,dr,
$$
we can estimate $\xi^\varepsilon(t)$ as
\begin{align*}
e^{\beta (t-t_0)} \|\xi^\varepsilon(t)\|_{X_\varepsilon^{\frac{1}{2}}} & \leq M \|\xi^\varepsilon(t_0)\|_{X_\varepsilon^{\frac{1}{2}}}+ \Big[\rho M+\frac{\rho^2 M^2(1+\Delta)}{\beta-\gamma-\rho M(1+\Delta) }\Big]\int_{t_0}^t e^{\beta(r-t_0)}\|\xi^\varepsilon(r)\|_{X_\varepsilon^{\frac{1}{2}}} \,dr\\
& +\frac{\rho^2M^3(1+\Delta)}{\beta-\gamma-\rho M (1+\Delta)}\int_{t_0}^t e^{-(\beta-\gamma-\rho M(1+\Delta)(\theta-t_0)}e^{\beta (\theta-t_0)}\|\xi^\varepsilon(\theta)\|_{X_\varepsilon^{\frac{1}{2}}}\,d\theta\\
&\leq M \|\xi^\varepsilon(t_0)\|_{X_\varepsilon^{\frac{1}{2}}}+\Big[\rho M+\frac{\rho^2 M^2(1+\Delta)(1+M)}{\beta-\gamma-\rho M(1+\Delta)}\Big]\int_{t_0}^t e^{\beta(r-t_0)}\|\xi^\varepsilon(r)\|_{X_\varepsilon^{\frac{1}{2}}}\,dr.
\end{align*}
By Gronwall inequality
$$
\|\xi^\varepsilon(t)\|_{X_\varepsilon^{\frac{1}{2}}}\leq M\|\xi^\varepsilon(t_0)\|_{X_\varepsilon^{\frac{1}{2}}}e^{-(L-\beta)(t-t_0)},
$$
and then
$$
\|z^\varepsilon(t)-s_*^\varepsilon(v^\varepsilon(t))\|_{X_\varepsilon^{\frac{1}{2}}}=\|\xi^\varepsilon(t)\|_{X_\varepsilon^{\frac{1}{2}}}\leq M\|\xi^\varepsilon(t_0)\|_{X_\varepsilon^{\frac{1}{2}}}e^{-(L-\beta)(t-t_0)}.
$$

Now if $u^\varepsilon:=T_\varepsilon(t)u_0^\varepsilon=v^\varepsilon(t)+z^\varepsilon(t)$, $t\in\R$, denotes the solution through at $u_0^\varepsilon=v_0^\varepsilon+z_0^\varepsilon\in \mathcal{A}_\varepsilon$, then
$$
\|z^\varepsilon(t)-s_*^\varepsilon(v^\varepsilon(t))\|_{X_\varepsilon^{\frac{1}{2}}}\leq M\|z_0^\varepsilon-s_*^\varepsilon(v_0^\varepsilon)\|_{X_\varepsilon^{\frac{1}{2}}}e^{-(L-\beta)(t-t_0)}.
$$
Since $\{T_\varepsilon(t)u_0^\varepsilon\,;\,t\in\R\}\subset\mathcal{A}_\varepsilon$ is bounded, letting $t_0\to-\infty$ we obtain $T_\varepsilon(t)u_0^\varepsilon=v^\varepsilon(t)+s_*^\varepsilon(v^\varepsilon(t))\in\mathcal{M}_\varepsilon$. That is $\mathcal{A}_\varepsilon\subset\mathcal{M}_\varepsilon$. Moreover, if $B_\varepsilon\subset X_\varepsilon^\frac{1}{2}$ is a bounded set and  $u_0^\varepsilon=v_0^\varepsilon+z_0^\varepsilon\in B_\varepsilon$, and we conclude that $T_\varepsilon(t)u_0^\varepsilon=v^\varepsilon(t)+z^\varepsilon(t)$ satisfies
\begin{align*}
\sup_{u_0^\varepsilon\in B_\varepsilon}\inf_{w\in\mathcal{M}_\varepsilon}\|T_\varepsilon(t)u_0^\varepsilon-w\|_{X_\varepsilon^\frac{1}{2}}&\leq \sup_{u_0^\varepsilon\in B_\varepsilon}\|z^\varepsilon(t)-s_*^\varepsilon(v^\varepsilon(t))\|_{X_\varepsilon^\frac{1}{2}}\\
&\leq Me^{-(L-\beta)(t-t_0)} \sup_{u_0^\varepsilon\in B_\varepsilon}\|z_0^\varepsilon-s_*^\varepsilon(v_0^\varepsilon)\|_{X_\varepsilon^{\frac{1}{2}}},
\end{align*}
which implies
$$
\tn{dist}_H(T_\varepsilon(t)B_\varepsilon,\mathcal{M}_\varepsilon)\leq C(B_\varepsilon)e^{-(L-\beta)(t-t_0)},
$$
and thus the proof is complete.
\end{proof}

\section{Rate of Convergence of Attractors}\label{Rate_of_Convergence_of_Attractors}
In this section we will estimate the continuity of attractors of \eqref{nonlinear_semigroup} in the Hausdorff metric by rate of convergence of resolvent operators obtained in the Section \ref{Rate of Convergence of Resolvent and Equilibria}.

\begin{theo}\label{rate_attractors} Let $\mathcal{A}_\varepsilon$, $\varepsilon\in[0,\varepsilon_0]$, be the attractor for \eqref{nonlinear_semigroup}. Then there is a positive constant $C$ independent of $\varepsilon$ such that
$$
\tn{d}_H(\mathcal{A}_\varepsilon,\mathcal{A}_0)\leq C (\tau(\varepsilon)+p(\varepsilon)^{-\frac{1}{2}}).
$$
\end{theo}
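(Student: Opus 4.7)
The plan is to exploit the reduction of the flow on $\mathcal{A}_\varepsilon$ to a one-dimensional ODE on $Y_\varepsilon$ provided by the invariant manifold $\mathcal{M}_\varepsilon$ of the previous section, and then to transfer the comparison into the limit space $X_0^\frac{1}{2}$ using the projection and equilibrium estimates of Section \ref{Rate of Convergence of Resolvent and Equilibria}. The uniform bound $\sup_{\varepsilon\in[0,\varepsilon_0]}\sup_{w\in\mathcal{A}_\varepsilon}\|w\|_{X_\varepsilon^\frac{1}{2}}<\infty$ will be used throughout to absorb amplitudes of trajectories into the constant $C$.

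First, since $\mathcal{A}_\varepsilon\subset\mathcal{M}_\varepsilon$, every $u^\varepsilon\in\mathcal{A}_\varepsilon$ decomposes as $u^\varepsilon=v^\varepsilon+s_\ast^\varepsilon(v^\varepsilon)$ with $v^\varepsilon\in Y_\varepsilon$, and the image $\tilde{\mathcal{A}}_\varepsilon:=Q_\varepsilon(\bar{\lambda})\mathcal{A}_\varepsilon\subset Y_\varepsilon$ is the global attractor of the one-dimensional reduced equation exhibited at the end of Section \ref{Invariant_Manifold}. A direct application of \eqref{estimate_invariant_manifold} yields, for every $u^\varepsilon\in\mathcal{A}_\varepsilon$ and $w\in\mathcal{A}_0$,
$$
\|u^\varepsilon-w\|_{X_\varepsilon^\frac{1}{2}}\leq \|v^\varepsilon-w\|_{X_\varepsilon^\frac{1}{2}}+C(\tau(\varepsilon)+p(\varepsilon)^{-\frac{1}{2}}),
$$
and the symmetric bound holds in the reverse direction, so it is enough to control $\tn{d}_H(\tilde{\mathcal{A}}_\varepsilon,\mathcal{A}_0)$ inside $X_\varepsilon^\frac{1}{2}$.

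Next I would push $\tilde{\mathcal{A}}_\varepsilon$ into $X_0^\frac{1}{2}$ via $P$. Since $v^\varepsilon=Q_\varepsilon(\bar{\lambda})v^\varepsilon$, one has $v^\varepsilon-Pv^\varepsilon=(Q_\varepsilon(\bar{\lambda})-P)v^\varepsilon$, and combining the projection estimate \eqref{projection_convergence} with the uniform embedding $X_\varepsilon^\frac{1}{2}\hookrightarrow L^2$ provided by \eqref{non_equivalence_norm} gives $\|v^\varepsilon-Pv^\varepsilon\|_{X_\varepsilon^\frac{1}{2}}\leq C(\tau(\varepsilon)+p(\varepsilon)^{-\frac{1}{2}})$, uniformly for $v^\varepsilon\in\tilde{\mathcal{A}}_\varepsilon$. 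This reduces the task to comparing $P\tilde{\mathcal{A}}_\varepsilon$ with $\mathcal{A}_0$ inside the one-dimensional space $X_0^\frac{1}{2}$. In that setting $T_0(\cdot)$ is Morse--Smale and gradient-like so $\mathcal{A}_0=[x_*^{1,0},x_*^{m,0}]$; the reduced equation on $Y_\varepsilon$ is a $C^1$-small perturbation of the limit ODE (by closeness of $Q_\varepsilon(\bar{\lambda})$ to $P$, the bound \eqref{estimate_invariant_manifold} on $s_\ast^\varepsilon$, and $f\in C^1$); it therefore remains gradient-like with the same number of hyperbolic equilibria, which by the previous theorem are the $u_*^{i,\varepsilon}$ satisfying \eqref{rate_of_equilibrium}. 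On the line its attractor is exactly the closed interval joining the extremal equilibria, and \eqref{rate_of_equilibrium} forces $\tn{d}_H(P\tilde{\mathcal{A}}_\varepsilon,\mathcal{A}_0)\leq C(\tau(\varepsilon)+p(\varepsilon)^{-\frac{1}{2}})$. Chaining the three estimates proves the theorem.

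The main obstacle is this last one-dimensional structural comparison: one has to justify with an \emph{explicit} rate, rather than by a compactness/continuity argument, that the attractor of the reduced perturbed ODE is still precisely the interval between its extremal equilibria and that no new recurrent dynamics appears for small $\varepsilon$. This is exactly the point where the Morse--Smale assumption on $T_0$ and the hyperbolicity of each $x_*^{i,0}$ are essential, and where the rate produced by \eqref{rate_of_equilibrium} has to be propagated through the identification $Y_\varepsilon\simeq\R$ induced by $P|_{Y_\varepsilon}$ without any loss in the exponents.
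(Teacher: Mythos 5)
Your proposal is correct and follows essentially the same route as the paper: the identical three-step triangle-inequality decomposition through $Q_\varepsilon(\bar{\lambda})\mathcal{A}_\varepsilon$ and $PQ_\varepsilon(\bar{\lambda})\mathcal{A}_\varepsilon$, using \eqref{estimate_invariant_manifold} for the first leg, \eqref{projection_convergence} together with the uniform bound on the attractors for the second, and Morse--Smale structural stability plus \eqref{rate_of_equilibrium} to reduce the final one-dimensional comparison to the endpoints of the interval. The ``obstacle'' you flag at the end is handled in the paper exactly as you suggest---by invoking stability of Morse--Smale semigroups (citing \cite{Bortolan}) to assert $\mathcal{A}_\varepsilon=[x_*^{1,\varepsilon},x_*^{m,\varepsilon}]$ and then estimating $\|PQ_\varepsilon(\bar{\lambda})x_*^{i,\varepsilon}-x_*^{i,0}\|_{X_\varepsilon^{1/2}}$ by the triangle inequality.
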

\begin{proof} By triangle inequality,
\begin{align*}
\tn{d}_H(\mathcal{A}_\varepsilon,\mathcal{A}_0)&\leq \tn{d}_H(\mathcal{A}_\varepsilon,Q_\varepsilon(\bar{\lambda})\mathcal{A}_\varepsilon)+
\tn{d}_H(Q_\varepsilon(\bar{\lambda})\mathcal{A}_\varepsilon,\mathcal{A}_0)\\
& \leq \tn{d}_H(\mathcal{A}_\varepsilon,Q_\varepsilon(\bar{\lambda})\mathcal{A}_\varepsilon)+
\tn{d}_H(Q_\varepsilon(\bar{\lambda})\mathcal{A}_\varepsilon,PQ_\varepsilon(\bar{\lambda})\mathcal{A}_\varepsilon)+
\tn{d}_H(PQ_\varepsilon(\bar{\lambda})\mathcal{A}_\varepsilon,\mathcal{A}_0).
\end{align*}

We will estimate each part. 

Let $z\in\mathcal{A}_\varepsilon$, since $\mathcal{A}_\varepsilon\subset\mathcal{M}_\varepsilon$, $z=z^\varepsilon+s_*^\varepsilon(z^\varepsilon)$ for some $z^\varepsilon\in Q_\varepsilon(\bar{\lambda})\mathcal{A}_\varepsilon$, then
\begin{align*}
\tn{dist}(z,Q_\varepsilon(\bar{\lambda})\mathcal{A}_\varepsilon)&=\inf_{x\in Q_\varepsilon(\bar{\lambda})\mathcal{A}_\varepsilon}\|z-x\|_{X_\varepsilon^\frac{1}{2}}\leq \|z^\varepsilon+s_*^\varepsilon(z^\varepsilon)-z^\varepsilon\|_{X_\varepsilon^\frac{1}{2}}=\|s_*^\varepsilon(z^\varepsilon)\|_{X_\varepsilon^\frac{1}{2}}\\
&\leq |\!|\!|s_*^\varepsilon|\!|\!|\leq C(\tau(\varepsilon)+p(\varepsilon)^{-\frac{1}{2}}),
\end{align*} 
\begin{align*}
\tn{dist}(z^\varepsilon,\mathcal{A}_\varepsilon)&=\inf_{x\in \mathcal{A}_\varepsilon}\|z^\varepsilon
-x\|_{X_\varepsilon^\frac{1}{2}}\leq \|z^\varepsilon-(z^\varepsilon+s_*^\varepsilon(z^\varepsilon))\|_{X_\varepsilon^\frac{1}{2}}=\|s_*^\varepsilon(z^\varepsilon)\|_{X_\varepsilon^\frac{1}{2}}\\
&\leq |\!|\!|s_*^\varepsilon|\!|\!|\leq C(\tau(\varepsilon)+p(\varepsilon)^{-\frac{1}{2}}),
\end{align*} 
which implies 
$$
\tn{d}_H(\mathcal{A}_\varepsilon,Q_\varepsilon(\bar{\lambda})\mathcal{A}_\varepsilon)= \tn{dist}_H(\mathcal{A}_\varepsilon,Q_\varepsilon(\bar{\lambda})\mathcal{A}_\varepsilon)+
\tn{dist}_H(Q_\varepsilon(\bar{\lambda})\mathcal{A}_\varepsilon,\mathcal{A}_\varepsilon)\leq C(\tau(\varepsilon)+p(\varepsilon)^{-\frac{1}{2}}).
$$
 
Let $z^\varepsilon\in Q_\varepsilon(\bar{\lambda})\mathcal{A}_\varepsilon$ then $z^\varepsilon=Q_\varepsilon(\bar{\lambda})w^\varepsilon$ for some $w^\varepsilon\in\mathcal{A}_\varepsilon$. Since $\bigcup_{\varepsilon\in[0,\varepsilon_0]}\mathcal{A}_\varepsilon$ is uniformly bounded in $X_\varepsilon^\frac{1}{2}$ we can assume $\|w^\varepsilon\|_{X_\varepsilon^\frac{1}{2}}\leq C$. Thus
\begin{align*}
\tn{dist}(z^\varepsilon,PQ_\varepsilon(\bar{\lambda})\mathcal{A}_\varepsilon)&=\inf_{x\in PQ_\varepsilon(\bar{\lambda})\mathcal{A}_\varepsilon}\|z^\varepsilon
-x\|_{X_\varepsilon^\frac{1}{2}}\leq \|z^\varepsilon-Pz^\varepsilon\|_{X_\varepsilon^\frac{1}{2}}=\|Q_\varepsilon(\bar{\lambda})w^\varepsilon-PQ_\varepsilon(\bar{\lambda})w^\varepsilon\|_{X_\varepsilon^\frac{1}{2}}\\
&=\|(Q_\varepsilon(\bar{\lambda})-P)Q_\varepsilon(\bar{\lambda})w^\varepsilon\|_{X_\varepsilon^\frac{1}{2}}\leq C(\tau(\varepsilon)+p(\varepsilon)^{-\frac{1}{2}}),
\end{align*} 
\begin{align*}
\tn{dist}(Pz^\varepsilon,Q_\varepsilon(\bar{\lambda})\mathcal{A}_\varepsilon)&=\inf_{x\in Q_\varepsilon(\bar{\lambda})\mathcal{A}_\varepsilon}\|Pz^\varepsilon
-x\|_{X_\varepsilon^\frac{1}{2}}\leq \|Pz^\varepsilon-z^\varepsilon\|_{X_\varepsilon^\frac{1}{2}}=\|PQ_\varepsilon(\bar{\lambda})w^\varepsilon-Q_\varepsilon(\bar{\lambda})w^\varepsilon\|_{X_\varepsilon^\frac{1}{2}}\\
&=\|(P-Q_\varepsilon(\bar{\lambda}))Q_\varepsilon(\bar{\lambda})w^\varepsilon\|_{X_\varepsilon^\frac{1}{2}}\leq C(\tau(\varepsilon)+p(\varepsilon)^{-\frac{1}{2}}),
\end{align*} 
which implies 
\begin{align*}
\tn{d}_H(Q_\varepsilon(\bar{\lambda})\mathcal{A}_\varepsilon,PQ_\varepsilon(\bar{\lambda})\mathcal{A}_\varepsilon)&= \tn{dist}_H(Q_\varepsilon(\bar{\lambda})\mathcal{A}_\varepsilon,PQ_\varepsilon(\bar{\lambda})\mathcal{A}_\varepsilon)+
\tn{dist}_H(PQ_\varepsilon(\bar{\lambda})\mathcal{A}_\varepsilon,Q_\varepsilon(\bar{\lambda})\mathcal{A}_\varepsilon)\\
&\leq C(\tau(\varepsilon)+p(\varepsilon)^{-\frac{1}{2}}).
\end{align*}

Finally we have $\mathcal{A}_0=[x_*^{1,0},x_*^{p,0}]$ and $T_0(\cdot)$ is a Morse-Smale semigroup. Since Morse-Smale semigroup are stable (see \cite{Bortolan}) we can assume $\mathcal{A}_\varepsilon=[x_*^{1,\varepsilon},x_*^{m,\varepsilon}]$ that is $Q_\varepsilon(\bar{\lambda})\mathcal{A}_\varepsilon=[Q_\varepsilon(\bar{\lambda})x_*^{1,\varepsilon},Q_\varepsilon(\bar{\lambda})x_*^{m,\varepsilon}]$ which implies $PQ_\varepsilon(\bar{\lambda})\mathcal{A}_\varepsilon=[PQ_\varepsilon(\bar{\lambda})x_*^{1,\varepsilon},PQ_\varepsilon(\bar{\lambda})x_*^{m,\varepsilon}]$. Without loss of generality we can assume $PQ_\varepsilon(\bar{\lambda})x_*^{1,\varepsilon}\leq x_*^{1,0}$ and $PQ_\varepsilon(\bar{\lambda})x_*^{m,\varepsilon}\leq x_*^{m,0}$. Thus
$$
\tn{d}_H(PQ_\varepsilon(\bar{\lambda})\mathcal{A}_\varepsilon,\mathcal{A}_0)=\|PQ_\varepsilon(\bar{\lambda})x_*^{1,\varepsilon}- x_*^{1,0}\|_{X_\varepsilon^\frac{1}{2}}+\|PQ_\varepsilon(\bar{\lambda})x_*^{m,\varepsilon}- x_*^{m,0}\|_{X_\varepsilon^\frac{1}{2}}.
$$
But
\begin{align*}
\|PQ_\varepsilon(\bar{\lambda})x_*^{1,\varepsilon}- x_*^{1,0}\|_{X_\varepsilon^\frac{1}{2}}&=\|PQ_\varepsilon(\bar{\lambda})x_*^{1,\varepsilon}- Px_*^{1,0}\|_{X_\varepsilon^\frac{1}{2}}\leq \|P\|_{\LL(L^2)}\|Q_\varepsilon(\bar{\lambda})x_*^{1,\varepsilon}- Px_*^{1,0}\|_{X_\varepsilon^\frac{1}{2}}\\
&\leq \|P\|_{\LL(L^2)}\|Q_\varepsilon(\bar{\lambda})x_*^{1,\varepsilon}- Px_*^{1,\varepsilon}\|_{X_\varepsilon^\frac{1}{2}}+\|P\|_{\LL(L^2)}\|Px_*^{1,\varepsilon}-Px_*^{1,0}\|_{X_\varepsilon^\frac{1}{2}}\\
&\leq C(\tau(\varepsilon)+p(\varepsilon)^{-\frac{1}{2}}).
\end{align*}
In the same way
$
\|PQ_\varepsilon(\bar{\lambda})x_*^{m,\varepsilon}- x_*^{m,0}\|_{X_\varepsilon^\frac{1}{2}}\leq C(\tau(\varepsilon)+p(\varepsilon)^{-\frac{1}{2}}).
$
\end{proof}

%%%%%%%%%%%%%%%%%%%%%%%%%%%%%%%%%%%%%%%%%%%%%%%%%%%%%%%%%%%%%%%%%%%%%%%%%%%%%%%%%%%%%%%%%%%%%%%%%%%
%\clearpage\thispagestyle{empty}%\cleardoublepage
\bibliographystyle{abbrv}
\bibliography{../Jabref/References}

\begin{thebibliography}{1}

\bibitem{Arrieta}
J.~M. Arrieta, F.~D. Bezerra, and A.~N. Carvalho.
\newblock Rate of convergence of attractors for some singular perturbed
  parabolic problems.
\newblock {\em Top. Methdos Nonlinear Anal}, 2013.

\bibitem{J.M.Arrieta1999}
J.~M. Arrieta, A.~N. Carvalho, and A.~Rodríguez-Bernal.
\newblock Parabolic problems with nonlinear boundary conditions and critical
  nonlinearities.
\newblock {\em Journal of Differential Equations}, 156:376--406, 1999.

\bibitem{J.M.Arrieta2000}
J.~M. Arrieta, A.~N. Carvalho, and A.~Rodríguez-Bernal.
\newblock Attractors for parabolic problems with nonlinear boundary bondition.
  uniform bounds.
\newblock {\em Commun. in partial differential equations}, 25:1--37, 2000.

\bibitem{Bortolan}
M.~C. Bortolan, A.~N. Carvalho, J.~A. Langa, and G.~Raugel.
\newblock Non-autonomous perturbations of morse-smale semigroups: stability of
  the phase diagram.
\newblock {\em Preprint}.

\bibitem{Carbone2008}
V.~L. Carbone, A.~N. Carvalho, and K.~Schiabel-Silva.
\newblock Continuity of attractors for parabolic problems with localized large
  diffusion.
\newblock {\em Nonlinear Analysis: Theory, Methods and Applications},
  68(3):515--535, 2008.

\bibitem{Carvalho2010a}
A.~N. Carvalho, J.~W. Cholewa, G.~Lozada-Cruz, and M.~R. Primo.
\newblock Reduction of infinite dimensional systems to finite dimensions:
  Compact convergence approach.
\newblock {\em SIAM J. Math. Anal.}, 45(2):600–638, 2010.

\bibitem{Hale1988}
J.~K. Hale.
\newblock {\em Asymptotic behavior of dissipative systems}, volume~25.
\newblock Mathematical surveys and monographs, 1988.

\bibitem{Henry1980}
D.~Henry.
\newblock {\em Geometric theory of semilinear parabolic equations}.
\newblock Number 840 in Springer-Velag. Lecture Notes in Mathe\-matics, 1980.

\bibitem{Rodriguez-Bernal2005}
A.~Rodríguez-Bernal and R.~Willie.
\newblock Singular large diffusivity and spatial homogenization in a non
  homogeneous linear parabolic problem.
\newblock {\em Discrete and Continuous Dynamical Systems. Series B. A Journal
  Bridging Mathematics and Sciences}, 5(2):385--410, 2005.

\end{thebibliography}

%%%%%%%%%%%%%%%%%%%%%%%%%%%%%%%%%%%%%%%%%%%%%%%%%%%%%%%%%%%%%%%%%%%%%%%%%%%%%%%%%%%%%%%%%%%%%%%%%%%%%%

\end{document}